\definecolor{cadmiumgreen}{rgb}{0.0, 0.42, 0.24}
\newtheorem{thm}{Theorem}[section]
\newtheorem*{thm*}{Theorem}
\newtheorem{mainthm}{Theorem}
\newtheorem{prop}[thm]{Proposition}
\newtheorem{lem}[thm]{Lemma}
\newtheorem{cor}[thm]{Corollary}
\theoremstyle{definition}
\newtheorem{dfn}[thm]{Definition}
\newtheorem*{dfn*}{Definition}
\newtheorem{eg}[thm]{Example}
\newtheorem*{eg*}{Example}
\newtheorem{rmk}[thm]{Remark}
\newtheorem*{rmk*}{Remark}
\newcommand{\RR}{\mathbb{R}}
\newcommand{\one}{\mathds{1}}
\newcommand{\bone}{\mathbf{1}}
\newcommand{\boldm}{\mathbf{m}}
\newcommand{\boldu}{\mathbf{u}}
\newcommand{\boldw}{\mathbf{w}}
\newcommand{\Da}{{D}}
\newcommand{\La}{L}
\newcommand{\tr}{\intercal}
\DeclareMathOperator{\cof}{cof}
\DeclareMathOperator{\conv}{conv}
\newcommand{\trees}{\mathcal{F}_1}
\newcommand{\forests}{\mathcal{F}}
\DeclareMathOperator{\outdeg}{outdeg}
\begin{document}
\title{Principal minors of tree distance matrices}
\author{Harry Richman}
\email{\href{mailto:hrichman@ncts.ntu.edu.tw}{hrichman@ncts.ntu.edu.tw}}
\author{Farbod Shokrieh}
\email{\href{mailto:farbod@uw.edu}{farbod@uw.edu}}
\author{Chenxi Wu}
\email{\href{mailto:cwu367@math.wisc.edu}{cwu367@math.wisc.edu}}
\date{\today}

\subjclass[2020]
{
\href{https://mathscinet.ams.org/msc/msc2020.html?t=05C50}{05C50},
\href{https://mathscinet.ams.org/msc/msc2020.html?t=05C05}{05C05},
\href{https://mathscinet.ams.org/msc/msc2020.html?t=05C12}{05C12},
\href{https://mathscinet.ams.org/msc/msc2020.html?t=05C30}{05C30},
\href{https://mathscinet.ams.org/msc/msc2020.html?t=31C15}{31C15}
}


\begin{abstract}
We prove that the principal minors of the distance matrix of a tree satisfy a combinatorial expression involving counts of rooted spanning forests of the underlying tree.
This generalizes a result of Graham and Pollak,
and refines a result of Graham and Lov\'asz on the coefficients of the characteristic polynomial of the distance matrix.
We also give such an expression for the case of trees with edge lengths.
We use arguments motivated by potential theory on graphs.
Our formulas can be expressed in terms of evaluations of Symanzik polynomials.
\end{abstract}
\maketitle

\setcounter{tocdepth}{1}
\tableofcontents

\section{Introduction}
\renewcommand*{\themainthm}{\Alph{mainthm}}

Suppose $G = (V,E)$ is a tree with $n$ vertices.
Let $D$ denote the {\em distance matrix} of $G$, defined by setting the $(u,v)$-entry to the length of the unique path from $u$ to $v$.
In~\cite{graham-pollak}, Graham and Pollak prove
\begin{equation}\label{eq:full-det}
	\det D = (-1)^{n-1} 2^{n-2} (n-1). 
\end{equation}
This identity is remarkable in that the result does not depend on the tree structure,
beyond the number of vertices.
The identity \eqref{eq:full-det} was motivated by a problem in data communication,
and inspired much further research on distance matrices.

The main result of this paper is to generalize \eqref{eq:full-det} by replacing $\det D$ with any of its principal minors.
For a subset $S \subseteq V$, let $D[S]$ denote the submatrix consisting of the $S$-indexed rows and columns of $D$.
\begin{mainthm}
\label{thm:main}
	Suppose $G$ is a tree with $n$ vertices whose distance matrix is $D$.
	Let $S \subseteq V$ be a nonempty subset of vertices.
	Then
	\begin{equation}\label{eq:main}
		\det D[S] = (-1)^{|S|-1} 2^{|S|-2} \Big( (n-1)\, \kappa(G;S)  - \!\!\! \sum_{F \in \forests_2(G; S)} \left(\outdeg(F,*) - 2\right)^2  \Big),
	\end{equation}
	where 
	$\kappa(G;S)$ is the number of $S$-rooted spanning forests of $G$,
	$\forests_2(G;S)$ is the set of $(S,*)$-rooted spanning forests of $G$,
	and
	$\outdeg(F,*)$ denotes the outdegree of the floating component of $F$.
\end{mainthm}
For definitions of rooted spanning forests as well as other terminology, see \S\ref{sec:graphs-matrices}.
When $S = V$ is the full vertex set, the set of $V$-rooted spanning forests is a singleton (consisting of the subgraph with no edges) so $\kappa(G; V) = 1$. Moreover, the set $\forests_2(G; V)$ of $(V, *)$-rooted spanning forests is empty. 
Thus Theorem~\ref{thm:main} recovers the Graham--Pollak identity \eqref{eq:full-det}.

\subsection{Trees with edge lengths}

Assume the tree $G$ is endowed with a collection of positive real edge lengths $\alpha = \{\alpha_e \colon e\in E\}$. 
The {\em distance matrix} (with respect to $\alpha$), which we will again denote by $\Da$, is defined by setting the $(u,v)$-entry to the sum of the edge lengths $\alpha_e$ along the unique path from $u$ to $v$.
The relation \eqref{eq:full-det} has a generalization for the distance matrix of a tree with edge lengths, proved by Bapat, Kirkland, and Neumann (\cite{bapat-kirkland-neumann}*{Corollary 2.5}):
\begin{equation}\label{eq:w-full-det}
	\det \Da = (-1)^{n-1} 2^{n-2} \Big(\sum_{e \in E} \alpha_e\Big) \Big( \prod_{e \in E} \alpha_e \Big).
\end{equation}
We, in fact, prove the following more general result. 
For an edge subset $A \subseteq E$ 
let $w(A) = \prod_{e \not\in A} \alpha_e$.
(The quantity $w(A)$ is sometimes known as the ``weight'' of $A$, while other sources call it the ``coweight.'')
\begin{mainthm}\label{thm:w-main}
	Suppose $G = (V,E)$ is a finite tree with edge lengths $\{\alpha_e \colon e \in E\}$, and corresponding distance matrix $\Da$. 
	For any nonempty subset $S \subseteq V$, we have
	\begin{equation}\label{eq:w-main}
		\det \Da[S] = (-1)^{|S|-1} 2^{|S|-2} \Bigg( \sum_{e \in E}\alpha_e \!\!\!\! \sum_{T \in \trees(G;S)} \!\! w(T) - \!\!\!\!\sum_{F \in \forests_2(G;S)} \!\!w(F)\, (\outdeg(F,*) - 2)^2 \!\Bigg),
	\end{equation}
	where 
	$\trees(G;S)$ is the set of $S$-rooted spanning forests of $G$,
	and other notation is as above in Theorem~\ref{thm:main}.
\end{mainthm}
See \S\ref{sec:graphs-matrices} for explanations of the other terminology used here.
For examples of Theorem~\ref{thm:main} and \ref{thm:w-main} applied to specific trees, see \S\ref{sec:examples}.

\subsection{Coefficients of the characteristic polynomial}

In~\cite{graham-lovasz}, Graham and Lov\'asz study
the characteristic polynomial 
$
    \det(D - \lambda I) = \sum_{i \geq 0} \delta_i\, \lambda^i
$
of the distance matrix of a tree.
Note that $\delta_0 = \det D$.
They generalize the determinant formula \eqref{eq:full-det} by finding a combinatorial formula for every coefficient $\delta_i = \delta_i(G)$ of the characteristic polynomial.
Their expression for $\delta_i$ involves summing over all spanning forests of $G$ with $(i - 1)$, $i$, or $(i + 1)$-many edges.

The simplest case of their theorem, beyond $\delta_0$, is~\cite[Equation (6), p. 63]{graham-lovasz}
\begin{equation}\label{eq:delta-1-graham-lovasz}
    \delta_1(G) = (-1)^{n - 1} 2^{n - 3} \left(4\cdot N_{2 P_1}(G) + 2\cdot N_{P_2}(G) + 4n - 8 \right) ,
\end{equation}
where $G$ is a tree on $n$ vertices, and $N_{2 P_1}(G)$ (resp. $N_{P_2}(G)$) denotes the number of subgraphs of $G$ isomorphic to two disjoint edges (resp. to a two-edge path).

There is a well-known relationship between coefficients of the characteristic polynomial and principal minors, namely
\begin{equation*}
    \delta_i = (-1)^i \sum_{\substack{|S| = n - i}} \det D[S].
\end{equation*}
By applying Theorem~\ref{thm:main} and summing over vertex subsets of fixed size,
we obtain a new combinatorial expression for the coefficient $\delta_i$.

\begin{mainthm}
\label{thm:dist-poly-coeff}
Let $G$ be a tree with unit edge lengths, 
and let $\det(D - \lambda I) = \sum_{i} \delta_i \lambda^i$
denote the characteristic polynomial of its distance matrix.
Then 
\[
	\delta_{n - k} = (-1)^{n - 1} 2^{k - 2} \left((n - 1) \sum_{\substack{F \in \mathcal F_{k}(G) }} \!\!\! \pi(F) - \!\!\!\!\!\! \sum_{\substack{F \in \mathcal F_{k + 1}(G) \\ F = C_1 \sqcup \cdots \sqcup C_{k + 1}}} \!\!\!\!\!\! \pi(F) \sum_{j = 1}^{k + 1}  \frac{(\outdeg(C_j) - 2)^2}{|V(C_j)|}\right),
\]
where 
$\forests_k(G)$ denotes the set of $k$-component spanning forests of $G$,
and $\pi(F) = \prod_{i = 1}^k |V(C_i)|$ 
for a forest $F$ with components $C_1 \sqcup \cdots \sqcup C_k$.
\end{mainthm}

The expression in Theorem~\ref{thm:dist-poly-coeff} should be compared with the main result of Graham--Lov\'asz~\cite[Equation (42), p. 81]{graham-lovasz}, where the expression occupies three lines and it involves summing over spanning forests in $G$ with $(k - 1)$ components, as well as those with $k$ or $(k + 1)$ components.

It is not at all obvious that the Graham--Lov\'asz identity is equivalent to Theorem~\ref{thm:dist-poly-coeff}.
Here, we elaborate on the case of $\delta_1$: 
each forest in $\forests_{n - 1}(G)$ 
contains exactly one edge of $G$, so $|\forests_{n - 1}(G)| = n - 1$ and each forest $F$ in $\forests_{n - 1}(G)$ has $\pi(F) = 2$.
In $\forests_n(G)$ there is exactly one forest, with $\pi(F) = 1$, whose components are exactly the vertices of $G$.
Thus
\begin{equation}
	\delta_1(G) = (-1)^{n - 1} 2^{n - 3} \left( 2(n - 1)^2 - \sum_{v \in V} (\deg(v) - 2)^2 \right).
\end{equation}
Compare to \eqref{eq:delta-1-graham-lovasz}, which has a different combinatorial flavor.

\subsection{Normalized principal minors}\label{sec:Normalized principal minors}
One may ask how the expressions $\det D[S]$ vary, as we fix a tree distance matrix and vary the vertex subset $S$.
To our knowledge there is no nice behavior among these determinants but, as $S$ varies, there is nice behavior of a ``normalized'' version which we describe here.

Given a matrix $A$, let $\cof A$ denote the {\em sum of cofactors} of $A$, i.e. 
\begin{equation}\label{eq:cof}
	\cof A = \sum_{i} \sum_{j} (-1)^{i + j} \det A_{i,j} 
\end{equation}
where $A_{i,j}$ is the submatrix of $A$ that removes the $i$-th row and the $j$-th column.
If $A$ is invertible, then $\cof A$ is the sum of entries of the matrix inverse $A^{-1}$ multiplied by a factor of $\det A$, i.e. $\cof A = (\det A) (\bone^\tr A^{-1} \bone)$.

In \cite{bapat-sivasubramanian}, Bapat and Sivasubramanian show the following identity for the sum of cofactors of a distance submatrix $D[S]$ of a tree with edge lengths $\{\alpha_e \colon e \in E\}$,
\begin{equation}\label{eq:cof-trees}
	\cof D[S] = (-2)^{|S| - 1} \!\!\!\sum_{T \in \trees(G;S)} w(T) , 
\end{equation}
where $\trees(G;S)$ denotes the set of $S$-rooted spanning forests of $G$ (see \S\ref{sec:graphs-matrices} and Theorem~\ref{thm:distance-sub-cof}).
An immediate consequence of Theorem~\ref{thm:w-main}, together with \eqref{eq:cof-trees}, is the identity
\begin{equation}\label{eq:det-cof}
	\frac{\det D[S]}{\cof D[S]} = \frac12 \left( \sum_{e \in E} \alpha_e - \frac{\sum_{F \in \forests_2(G; S)} w(F) \,(\outdeg(F,*) - 2)^2}{\sum_{T \in \trees(G; S)} w(T)} \right).
\end{equation}

We will refer to ${\det D[S]}/{\cof D[S]}$ as the {\em normalized principal minor} corresponding to the vertex subset $S$. 
It turns out that normalized principal minors satisfy a monotonicity condition.

\begin{mainthm}[Monotonicity of normalized principal minors]
\label{thm:monotonic}
	In the setting of Theorem~\ref{thm:w-main}, if $A, B \subseteq V$ are nonempty vertex subsets with $A \subseteq B$,
	then
	\begin{equation*}
		\frac{\det D[A]}{\cof D[A]}  \leq \frac{\det D[B]}{\cof D[B]}.
	\end{equation*}
\end{mainthm}

The essential observation behind this result is an intriguing connection to the theory of quadratic optimization; 
we will show that the normalized principal minor $\det D[S] / \cof D[S]$ is the solution of the following optimization problem: for all vectors $ \boldu \in \RR^S$,
\begin{align*}
	\text{maximize objective function:} &\quad \boldu^\tr D[S] \boldu \\
	\text{with constraint:} &\quad \bone^\tr \boldu = 1.
\end{align*}
This result is obtained using the method of Lagrange multipliers,
and relies on knowledge of the signature of $D[S]$.
The proof is given in \S\ref{sec:optimization}.

For a nonempty subset $S$, equation \eqref{eq:det-cof} implies the upper bound in
\begin{equation*}
	0 \;\leq\; \frac{\det D[S]}{\cof D[S]} \;\leq\; \frac12 \sum_{e \in E(G)} \alpha_e ,
\end{equation*}
while the lower bound is implied by the quadratic optimization formulation.
We can use the monotonicity property to obtain refined bounds on $\det D[S] / \cof D[S]$, see Theorem~\ref{thm:det-cof-bounds}.

The monotonicity result in Theorem~\ref{thm:monotonic} was found independently by Devriendt \cite{devriendt-thesis}*{Property 3.38},
in the more general context of effective resistance matrices of graphs.

\renewcommand*{\thethm}{\arabic{section}.\arabic{thm}}

\begin{rmk}
	The terms appearing in the submatrix $D[S]$ only depend on the lengths of edges which lie on some path between vertices in $S$.
	Depending on the chosen subset $S \subseteq V$, these distances in $D[S]$ may ignore a large part of the ambient tree $G$.
	However, the expression in Theorem~\ref{thm:w-main} involves all edge lengths.

	To calculate $\det D[S]$ in this case, using Theorem~\ref{thm:w-main} ``efficiently,'' we could instead replace $G$ by the subtree  $\conv(S,G)$ consisting of the union of all paths between vertices in $S$,
	which we call the {\em convex hull} of $S$.
	However, the formulas as stated are true even without this replacement due to cancellation of terms.
\end{rmk}

\subsection{Connection with Symanzik polynomials}\label{sec:symanzik}
Recall, given a graph $G = (V, E)$, the {\em first Symanzik polynomial} is the homogeneous polynomial in edge-indexed variables $\underline{x} = \{x_e \colon e \in E\}$ defined by
\[
	\psi_G(\underline{x}) = \sum_{T \in \trees(G)} \prod_{e \not \in T} x_e ,
\]
where $\trees(G)$ denotes the set of spanning trees of $G$.

A ``momentum function'' is any $p\colon V \to \RR$ satisfying the constraint $\sum_{v \in V} p(v) = 0$. 
The {\em second Symanzik polynomial} is
\[
	\varphi_G(p; \underline{x}) = \sum_{F \in \forests_2(G)} \Big(\sum_{v \in F_1} p(v) \Big)^2 \prod_{e \not\in F} x_e ,
\]
where $\forests_2(G)$ is the set of two-component spanning forests of $G$ and $F_1$ denotes one of the components of $F = F_1 \sqcup F_2$.
Note that it does not matter which component we label as $F_1$, since the momentum constraint implies that
$\sum_{v \in F_1} p(v)  = - \sum_{v \in F_2} p(v)$.

We note that the expressions in Theorem~\ref{thm:w-main} and \eqref{eq:det-cof} are closely related to Symanzik polynomials.
Let $\psi_{(G/S)}$ and $\varphi_{(G/S)}$ denote the first and second Symanzik polynomials of the quotient graph $G/S$.
Let $p_{\rm can}$ be the momentum function defined on $G/S$ by $p_{\rm can}(v) = \deg(v) - 2$ for $v \not \in S$ and $p_{\rm can}(v_S) = -\sum_{v \not\in S} p_{\rm can}(v)$, where $v_S$ denotes the vertex image of $S$ in the quotient.

The identity in Theorem~\ref{thm:w-main} can be written as follows
\begin{equation}\label{eq:det-symanzik}
	\det D[S] = (-1)^{|S|-1} 2^{|S|-2} \left( \Big(\sum_{e \in E}\alpha_e \Big)\, \psi_{(G/S)}(\underline{\alpha}) - \varphi_{(G/S)}(p_{\rm can}; \underline{\alpha}) \right) .
\end{equation}
The ``normalized'' principal minor \eqref{eq:det-cof} can be written using Symanzik polynomials as
\begin{equation}\label{eq:det-cof-symanzik}
	\frac{\det D[S]}{\cof D[S]} = \frac12 \left( \sum_{e \in E} \alpha_e - \frac{\varphi_{(G/S)}(p_{\rm can}; \underline{\alpha})}{\psi_{(G/S)}(\underline{\alpha})} \right) .
\end{equation}

\subsection{Related work}

The idea to study the ratio $({\det D}/{\cof D})$ appeared early on in the literature on distance matrices.
In \cite{graham-hoffman-hosoya}, it is shown that this ratio is additive on wedge sums of graphs.
This powerful observation is enough to essentially obtain the Graham--Pollak identity \eqref{eq:full-det} and its weighted version \eqref{eq:w-full-det} as corollaries.

Choudhury and Khare~\cite{choudhury-khare} prove a generalization of the Graham--Pollak identity \eqref{eq:full-det} in which $D$ can be replaced with certain principal minors, but crucially their generalization excludes all cases where $\det D[S]$ depends on the underlying tree structure.
Namely, they consider precisely those minors $\det D[S]$ which can be expressed in terms of data assigned to edges (weights, edge counts, etc.), without any reference to what edges are incident to others.
In contrast, Theorems~\ref{thm:main} and \ref{thm:w-main} make evident use of the tree structure, via the spanning forests in $\forests_1(G; S)$ and $\forests_2(G; S)$.

It is natural to ask how our results for trees may be generalized to arbitrary finite graphs. We address this in an upcoming paper \cite{richman-shokrieh-wu}, which involves more technical machinery.
In another recent paper, we use related potential theoretic ideas to prove bounds on the number of two-component spanning forests in a graph~\cite{richman-shokrieh-wu-2}.
Some ideas in the current paper overlap with those developed in~\cite{devriendt-thesis}.
Some generalizations of our results, namely Theorem~\ref{thm:monotonic}, appear already in \cite{devriendt-thesis}*{Property 3.38}, which is concerned with effective resistances on graphs.
On a tree, effective resistance is the same as (path-)distance.
If $D$ is the distance matrix of a tree, then any principal minor $D[S]$ is the effective resistance matrix of some (edge-weighted) graph on the vertex set $S$.
The quantity $(\det D[S] / \cof D[S])$ is referred to as the {\em resistance radius} of $S$ (up to a factor of $2$),
and the vector $\boldm$ used in \S\ref{sec:distance_proofs} is called the {\em resistance curvature} (up to scaling) in Devriendt's terminology.
The idea to consider an $\boldm$-like vector as a ``curvature'' also appears in \cite{steinerberger}, which motivates the terminology by proving graph-theoretic analogues of some curvature-related theorems in differential geometry. 
The results in \S\ref{sec:distance_prelim} regarding the signature of the matrices $D[S]$ can be considered special discrete cases of the {\em Dirichlet pairing} defined by
Baker and Rumely~\cite{baker-rumely}.

Amini studies ratios of Symanzik polynomials in \cite{amini}, similar to the ratio appearing in \eqref{eq:det-cof-symanzik}, motivated by quantum field theory and the calculation of Feynman amplitudes~\cite{amini-bloch-etal}.
Amini's results apply for Symanzik polynomials on an arbitrary graph, in contrast to \eqref{eq:det-cof-symanzik} where the relevant graph is a vertex-quotient of a tree.
The first Symanzik polynomial can be computed using Kirchhoff's celebrated matrix-tree theorem.
The second Symanzik polynomial can similarly be calculated via determinant of the generalized Laplacian matrix; see 
\cite[Section 1.1]{amini} and \cite[Theorem 7.1]{brown}.

In \cite{gutierrez-lillo}, Guti\'{e}rrez and Lillo observe that Theorem~\ref{thm:main} can be expressed as
\begin{multline}
	\det D[S] = (-1)^{|S| - 1} 2^{|S| - 2} \Big( (|S| - 1)\, \kappa(G;S)  \\
	- \!\!\! \sum_{F \in \forests_2(G;S)} \left(\outdeg(F,*) - 1\right)\left(\outdeg(F,*) - 4\right)  \Big)
\end{multline}
through some straightforward algebraic manipulation,
while also providing a new proof for this result using a nice combinatorial argument involving sign-reversing involutions on collections of paths in $G$.

The results in this paper may be of interest to those studying phylogenetics.
In phylogenetics, one aims to find the tree that best represents the evolutionary history among a collection of organisms using biological data (e.g. DNA sequences).
In this tree, leaf vertices represent modern-day species, while internal vertices represent ancestral species.
There are standard methods for estimating pairwise distances between species along their evolutionary tree. 
This means we can often predict the distance submatrix $D[S]$ of the target tree, in which $S$ is the set of leaf vertices, and we would like to use this information in reverse to decide what underlying tree best fits this distance data.
Results such as Theorem~\ref{thm:monotonic} may lead to new tests for phylogenetic inference, or for evaluating tree instability~\cite{collienne-etal}.

\subsection*{Structure of the paper}

In \S\ref{sec:graphs-matrices} we review terminology and notation concerning graphs, spanning forests, Laplacian matrices, and degree-related identities.
In \S\ref{sec:distance_prelim} we recall some results on the spectral properties of the distance matrix and its principal submatrices.
In \S\ref{sec:optimization} we show that the normalized principal minor $(\det D[S] / \cof D[S])$ is the solution to some quadratic optimization problem involving the bilinear form $D[S]$.
This observation is used to prove
Theorem~\ref{thm:monotonic} (monotonicity).
In \S\ref{sec:distance_proofs} we prove Theorems~\ref{thm:main} and \ref{thm:w-main}, which give a combinatorial expression for $\det D[S]$ in terms of rooted spanning forests.
In \S\ref{sec:char-poly} we prove Theorem~\ref{thm:dist-poly-coeff} on the coefficients of the distance matrix characteristic polynomial.
We end the paper with some examples in \S\ref{sec:examples} demonstrating Theorems~\ref{thm:main} and \ref{thm:w-main}.

\subsection*{Acknowledgements}
The authors would like to thank Ravindra Bapat for helpful correspondence,
in particular related to the results in \S~\ref{sec:signature}. 
We also thank Matt Baker, Karel Devriendt, Apoorva Khare, and Sebastian Prillo for helpful discussions.
We thank \'{A}lvaro Guti\'{e}rrez for helpful discussions regarding his follow-up to our work.

HR was supported by the Howard Hughes Medical Institute and the Matsen Group at the Fred Hutchinson Cancer Center. 
FS was partially supported by NSF CAREER DMS-2044564 grant.
CW was partially supported by Simons Collaboration Grant 850685.

\section{Graphs and spanning forests}
\label{sec:graphs-matrices}

Throughout, let $G = (V, E)$ be a finite graph with (positive, real) edge lengths $\{ \alpha_e \colon e \in E\}$. 
A graph without assignment of edge lengths will be considered as a special case, with $\alpha_e=1$ for all $e \in E$.

We assume all graphs in the paper are equipped with an implicit (arbitrary) orientation. This means we fix a pair of functions $\mathrm{head}\colon E \to V$ and $\mathrm{tail}\colon E \to V$, such that $\mathrm{head}(e)$ and $\mathrm{tail}(e)$ are the endpoints of $e$. We abbreviate $\mathrm{head}(e)$ as $e^+$, and $\mathrm{tail}(e)$ as $e^-$.


\subsection{Spanning trees and forests}

A {\em spanning tree} of a graph $G$ is a subgraph which 
is connected, has no cycles,
and contains all vertices of $G$.
A {\em spanning forest} of a graph $G$ is a subgraph which 
has no cycles
and  contains all vertices of $G$.

Given a nonempty set of vertices $S$  
an {\em $S$-rooted spanning forest} of $G$ 
is a spanning forest which has exactly one vertex of $S$ in each connected component.
Given $s \in S$ and such a forest $F$, we let $F(s)$ denote the {\em $s$-component} of $F$, that is, the component of $F$ containing $s$.

An {\em $(S,*)$-rooted spanning forest} of $G$ is a spanning forest which has $|S|+1$ components, where $|S|$ components each contain one vertex of $S$, and the additional component is disjoint from $S$.
We call the component disjoint from $S$ the {\em floating component}, following terminology in \cite{kassel-kenyon-wu}.
For an $(S,*)$-rooted spanning forest $F$, we let $F(*)$ denote the floating component. We sometimes refer to the floating component as the $*$-component of $F$. Again, for $s \in S$, we let $F(s)$ denote the $s$-component of $F$.

Let $\trees(G;S)$ denote the set of $S$-rooted spanning forests of $G$,
and let $\forests_2(G;S)$ denote the set of $(S,*)$-rooted spanning forests of $G$.

\begin{eg}\label{eg:running}
Suppose $G$ is the tree with unit edge lengths shown below.
\[
\begin{tikzpicture}[scale=0.8]
	\coordinate (1) at (0,0);
	\coordinate (A) at (1,0);
	\coordinate (B) at (2,0);
	\coordinate (C) at (3,0);
	\coordinate (D) at (4,0);
	\coordinate (2) at (5,0);
	\coordinate (3) at (2,1);
	
	\foreach \c in {1,2,3,A,B,C,D} {
		\fill (\c) circle (2pt);
	}
	\foreach \c in {1,2,3} {
		\draw (\c) circle (3pt);
	}

	\draw (1) -- (A);
	\draw (A) -- (B);
	\draw (B) -- (3);
	\draw (B) -- (C);
	\draw (C) -- (D);
	\draw (D) -- (2);
	
	\foreach \c/\d/\label in {1/left/v_2,2/right/v_3,3/right/v_1} {
		\node[\d=0.2] at (\c) {$\label$};
	}
\end{tikzpicture}
\]
Let $S$ be the set of three leaf vertices.
Then $\trees(G;S)$ contains $11$ forests,
while $\forests_2(G;S)$ contains $19$ forests.
Some of these are shown in Figure~\ref{fig:1-forests} and Figure~\ref{fig:2-forests}, respectively.

\begin{figure}[h]
\centering
\begin{tikzpicture}[scale=0.5]
	\coordinate (1) at (0,0);
	\coordinate (A) at (1,0);
	\coordinate (B) at (2,0);
	\coordinate (C) at (3,0);
	\coordinate (D) at (4,0);
	\coordinate (2) at (5,0);
	\coordinate (3) at (2,1);
	
	\foreach \c in {1,2,3,A,B,C,D} {
		\fill (\c) circle (2pt);
	}
	\foreach \c in {1,2,3} {
		\draw (\c) circle (3pt);
	}

	\draw (1) -- (A);
	\draw[dotted] (A) -- (B);
	\draw (B) -- (C);
	\draw (C) -- (D);
	\draw (D) -- (2);
	\draw[dotted] (B) -- (3);
\end{tikzpicture}
\qquad
\begin{tikzpicture}[scale=0.5]
	\coordinate (1) at (0,0);
	\coordinate (A) at (1,0);
	\coordinate (B) at (2,0);
	\coordinate (C) at (3,0);
	\coordinate (D) at (4,0);
	\coordinate (2) at (5,0);
	\coordinate (3) at (2,1);
	
	\foreach \c in {1,2,3,A,B,C,D} {
		\fill (\c) circle (2pt);
	}
	\foreach \c in {1,2,3} {
		\draw (\c) circle (3pt);
	}

	\draw (1) -- (A);
	\draw[dotted] (A) -- (B);
	\draw (B) -- (C);
	\draw (C) -- (D);
	\draw[dotted] (D) -- (2);
	\draw (B) -- (3);
\end{tikzpicture}
\qquad
\begin{tikzpicture}[scale=0.5]
	\coordinate (1) at (0,0);
	\coordinate (A) at (1,0);
	\coordinate (B) at (2,0);
	\coordinate (C) at (3,0);
	\coordinate (D) at (4,0);
	\coordinate (2) at (5,0);
	\coordinate (3) at (2,1);
	
	\foreach \c in {1,2,3,A,B,C,D} {
		\fill (\c) circle (2pt);
	}
	\foreach \c in {1,2,3} {
		\draw (\c) circle (3pt);
	}

	\draw (1) -- (A);
	\draw[dotted] (A) -- (B);
	\draw (B) -- (C);
	\draw[dotted] (C) -- (D);
	\draw (D) -- (2);
	\draw (B) -- (3);
\end{tikzpicture}
\qquad
\begin{tikzpicture}[scale=0.5]
	\coordinate (1) at (0,0);
	\coordinate (A) at (1,0);
	\coordinate (B) at (2,0);
	\coordinate (C) at (3,0);
	\coordinate (D) at (4,0);
	\coordinate (2) at (5,0);
	\coordinate (3) at (2,1);
	
	\foreach \c in {1,2,3,A,B,C,D} {
		\fill (\c) circle (2pt);
	}
	\foreach \c in {1,2,3} {
		\draw (\c) circle (3pt);
	}

	\draw (1) -- (A);
	\draw[dotted] (A) -- (B);
	\draw[dotted] (B) -- (C);
	\draw (C) -- (D);
	\draw (D) -- (2);
	\draw (B) -- (3);
\end{tikzpicture}
\caption{Some forests in $\trees(G;S)$.}
\label{fig:1-forests}
\end{figure}

\begin{figure}[h]
\centering
\begin{tikzpicture}[scale=0.5]
	\coordinate (1) at (0,0);
	\coordinate (A) at (1,0);
	\coordinate (B) at (2,0);
	\coordinate (C) at (3,0);
	\coordinate (D) at (4,0);
	\coordinate (2) at (5,0);
	\coordinate (3) at (2,1);
	
	\foreach \c in {1,2,3,A,B,C,D} {
		\fill (\c) circle (2pt);
	}
	\foreach \c in {1,2,3} {
		\draw (\c) circle (3pt);
	}

	\draw (1) -- (A);
	\draw[dotted] (A) -- (B);
	\draw[dotted] (B) -- (3);
	\draw (B) -- (C);
	\draw (C) -- (D);
	\draw[dotted] (D) -- (2);

	\draw[color=red, line width=12pt, cap=round, opacity=0.1] (B) -- (D);
\end{tikzpicture}
\qquad
\begin{tikzpicture}[scale=0.5]
	\coordinate (1) at (0,0);
	\coordinate (A) at (1,0);
	\coordinate (B) at (2,0);
	\coordinate (C) at (3,0);
	\coordinate (D) at (4,0);
	\coordinate (2) at (5,0);
	\coordinate (3) at (2,1);
	
	\foreach \c in {1,2,3,A,B,C,D} {
		\fill (\c) circle (2pt);
	}
	\foreach \c in {1,2,3} {
		\draw (\c) circle (3pt);
	}

	\draw (1) -- (A);
	\draw[dotted] (A) -- (B);
	\draw[dotted] (B) -- (3);
	\draw (B) -- (C);
	\draw[dotted] (C) -- (D);
	\draw (D) -- (2);

	\draw[color=red, line width=12pt, cap=round, opacity=0.1] (B) -- (C);
\end{tikzpicture}
\qquad
\begin{tikzpicture}[scale=0.5]
	\coordinate (1) at (0,0);
	\coordinate (A) at (1,0);
	\coordinate (B) at (2,0);
	\coordinate (C) at (3,0);
	\coordinate (D) at (4,0);
	\coordinate (2) at (5,0);
	\coordinate (3) at (2,1);
	
	\foreach \c in {1,2,3,A,B,C,D} {
		\fill (\c) circle (2pt);
	}
	\foreach \c in {1,2,3} {
		\draw (\c) circle (3pt);
	}

	\draw (1) -- (A);
	\draw[dotted] (A) -- (B);
	\draw (B) -- (3);
	\draw (B) -- (C);
	\draw[dotted] (C) -- (D);
	\draw[dotted] (D) -- (2);

	\draw[color=red, line width=12pt, cap=round, opacity=0.1] (D) -- (D);
\end{tikzpicture}
\qquad
\begin{tikzpicture}[scale=0.5]
	\coordinate (1) at (0,0);
	\coordinate (A) at (1,0);
	\coordinate (B) at (2,0);
	\coordinate (C) at (3,0);
	\coordinate (D) at (4,0);
	\coordinate (2) at (5,0);
	\coordinate (3) at (2,1);
	
	\foreach \c in {1,2,3,A,B,C,D} {
		\fill (\c) circle (2pt);
	}
	\foreach \c in {1,2,3} {
		\draw (\c) circle (3pt);
	}

	\draw (1) -- (A);
	\draw[dotted] (A) -- (B);
	\draw (B) -- (3);
	\draw[dotted] (B) -- (C);
	\draw[dotted] (C) -- (D);
	\draw (D) -- (2);

	\draw[color=red, line width=12pt, cap=round, opacity=0.1] (C) -- (C);
\end{tikzpicture}
\caption{Some forests in $\forests_2(G;S)$, with floating component highlighted.}
\label{fig:2-forests}
\end{figure}
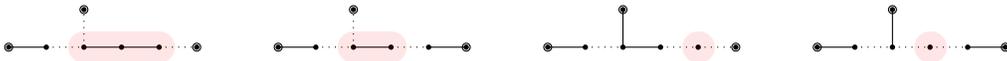
\end{eg}

\begin{rmk}
	Let $q$ be a fixed vertex. Then $\trees(G;\{q\})$ is the set of spanning trees of $G$, and $\forests_2(G;\{q\})$ is the set of two-component spanning forests of $G$. 
	Note that both these sets are independent of the choice of the vertex $q$, and were denoted by $\trees(G)$ and $\forests_2(G)$, respectively, in \S\ref{sec:symanzik}.
\end{rmk}

\subsection{Laplacian matrix}

The {\em incidence matrix} of a graph $G = (V,E)$ (endowed with an arbitrary orientation) is the matrix $B \in \RR^{V \times E}$ defined by
\[
	B_{v, e} = \one(v = e^+) - \one(v = e^-).
\]
Here $\one(\cdot)$ denotes the indicator function.

Let $\{ \alpha_e \colon e \in E\}$ denote the edge lengths as before. 
The Laplacian matrix is defined by
\begin{equation}\label{eq:weighted-laplacian}
	\La = B \begin{pmatrix}
	\alpha_1^{-1} & & \\
	& \ddots & \\
	& & \alpha_m^{-1}
	\end{pmatrix} B^\tr .
\end{equation}
It is clear that $\La$ is positive semidefinite, since edge lengths are positive. 
Note that the incidence matrix depends on the choice of the orientation but the Laplacian matrix does not.

\subsection{Principal minors matrix-tree theorem}

For a subset $A \subseteq E$ we define $w(A)$ as
\[
	w(A) = \prod_{e \not\in A} \alpha_e .
\]
If $H$ is a subgraph of $G$, we use $w(H)$ to denote $w(E(H))$. 
Let 
\[
	\kappa(G;S) = \sum_{T \in \trees(G; S)} w(T) .
\]
When all edge lengths are one, $\kappa(G; S)$ is the number of $S$-rooted spanning forests.

\begin{rmk}
If $G$ is a graph {\em without} assignment of edge lengths (so $\alpha_e=1$ for all $e \in E$), then $\kappa(G;S)$ is simply the number of $S$-rooted spanning forests of $G$.
In this case, $\kappa(G;S)$ is also the number of spanning trees of the quotient graph $G / S$, which ``glues together'' all vertices in $S$ as a single vertex $v_S$.
\end{rmk}

The {\em (principal minors) matrix-tree theorem} gives a determinantal formula for computing $\kappa(G;S)$, which we now explain. 
Given $S \subseteq V$, let $L[\overline S]$ denote the matrix obtained from $L$ by removing the rows and columns indexed by $S$.

\begin{thm}
\label{thm:matrix-tree}
Let $G = (V,E)$ be a finite graph with edge lengths $\{ \alpha_e \colon e \in E\}$.
For any nonempty vertex set $S \subseteq V$,
\begin{equation*}
	\kappa( G ; S) = \left(\prod_{e \in E} \alpha_e \right) \det L[\overline S] .
\end{equation*}
\end{thm}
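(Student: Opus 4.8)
The plan is to reduce the weighted statement to the classical (unweighted) all-minors matrix-tree theorem via the Cauchy--Binet formula applied to the factorization \eqref{eq:weighted-laplacian}. First I would introduce the diagonal matrix $\Lambda = \operatorname{diag}(\alpha_e^{-1})_{e \in E}$, so that $L = B\Lambda B^\tr$, and observe that deleting the rows and columns indexed by $S$ commutes with this factorization: if $\overline{B}$ denotes the matrix $B$ with its $S$-indexed rows removed, then $L[\overline{S}] = \overline{B}\, \Lambda\, \overline{B}^\tr$. Here $\overline{B} \in \RR^{\overline{S} \times E}$ where $\overline{S} = V \setminus S$, so this is a $(n - |S|) \times (n - |S|)$ matrix written as a product of an $(n-|S|) \times m$ matrix, a diagonal $m \times m$ matrix, and an $m \times (n-|S|)$ matrix.

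Next I would apply the Cauchy--Binet formula to expand $\det L[\overline{S}] = \det(\overline{B}\Lambda \cdot \overline{B}^\tr)$. This yields
\begin{equation*}
	\det L[\overline{S}] = \sum_{\substack{A \subseteq E \\ |A| = n - |S|}} \det(\overline{B}_A)\, \det((\Lambda \overline{B}^\tr)_A) = \sum_{\substack{A \subseteq E \\ |A| = n - |S|}} \Big(\prod_{e \in A} \alpha_e^{-1}\Big) \big(\det \overline{B}_A\big)^2,
\end{equation*}
where $\overline{B}_A$ denotes the square submatrix of $\overline{B}$ using the columns indexed by the edge subset $A$. The classical fact I would invoke (the unweighted all-minors matrix-tree theorem, or a direct argument) is that for $G$ a graph and $A$ an edge subset with $|A| = |V| - |S|$, the determinant $\det \overline{B}_A$ is $\pm 1$ if the edge subset $A$ forms an $S$-rooted spanning forest of $G$, and is $0$ otherwise. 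The key combinatorial point: a subgraph with $|V| - |S|$ edges and no cycles is automatically a forest with exactly $|S|$ components, and it is $S$-rooted precisely when no component contains two vertices of $S$ — equivalently, when the corresponding rows and columns of $\overline{B}_A$ give a nonsingular matrix. (This is standard: orient and order so that $\overline{B}_A$ becomes, after row/column permutation, block upper-triangular with $\pm 1$ diagonal blocks, one per tree component, using that each tree on $k$ vertices with one deleted row has incidence submatrix of determinant $\pm 1$.)

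Combining, $\det L[\overline{S}] = \sum_{T \in \trees(G;S)} \prod_{e \in T} \alpha_e^{-1} = \sum_{T \in \trees(G;S)} w(T) \big/ \prod_{e \in E}\alpha_e = \kappa(G;S) / \prod_{e \in E}\alpha_e$, since $w(T) = \prod_{e \notin T}\alpha_e = \big(\prod_{e \in E}\alpha_e\big)\prod_{e \in T}\alpha_e^{-1}$. Multiplying through by $\prod_{e \in E}\alpha_e$ gives the claimed identity. The main obstacle — really the only nontrivial content — is establishing the $\pm 1$/$0$ dichotomy for the incidence submatrices $\overline{B}_A$ and matching it correctly to the $S$-rooted spanning forest condition (as opposed to, say, forests that leave some component rootless or doubly-rooted); everything else is bookkeeping with Cauchy--Binet and the definition of $w$. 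If the paper prefers, one can cite the all-minors matrix-tree theorem (e.g. Chaiken, or Chen's \emph{Applied Graph Theory}) for this dichotomy rather than reproving it.
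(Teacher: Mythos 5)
Your proposal is correct, but it takes a different route from the paper only in the sense that the paper does not prove this statement at all: its ``proof'' is a citation to Tutte (Section VI.6, Equation (VI.6.7)), which is essentially the weighted principal-minors matrix-tree theorem in the form stated. What you have written out is the standard self-contained argument behind that citation: factor $L[\overline{S}] = \overline{B}\,\Lambda\,\overline{B}^\tr$ with $\Lambda = \operatorname{diag}(\alpha_e^{-1})$, expand by Cauchy--Binet, and use the $\pm 1$/$0$ dichotomy for the square incidence submatrices $\overline{B}_A$. Your handling of the dichotomy is right: a cycle in $A$ forces a column dependence, a component of $(V,A)$ disjoint from $S$ forces a row dependence (its rows sum to zero), and when $A$ is an $S$-rooted spanning forest the matrix is block diagonal by components with each block the incidence matrix of a tree with one row deleted, hence determinant $\pm 1$; the count $|A| = |V| - |S|$ correctly converts ``no doubly-rooted component'' into ``exactly one root per component.'' The final bookkeeping $w(T) = \bigl(\prod_{e \in E} \alpha_e\bigr) \prod_{e \in T} \alpha_e^{-1}$ matches the theorem's normalization, and your argument applies to arbitrary finite graphs, which is the generality the theorem is stated in. So the only trade-off is length versus self-containedness: the paper outsources the result, while your version reproves it (or, as you note, one could instead cite the all-minors matrix-tree theorem of Chaiken or Chen); either is acceptable.
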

\begin{proof}
See Tutte~\cite[Section VI.6, Equation (VI.6.7)]{tutte}.
\end{proof}
Note that the classical (Kirchhoff's) matrix-tree theorem is the special case where $S$ is a singleton.

\subsection{Tree splits and tree distance}
\label{sec:tree-splits}

Fix a tree $G = (V,E)$ with edge lengths $\{ \alpha_e \colon e \in E\}$ (and endowed with an arbitrary orientation). Given an edge $e \in E$, the edge deletion $G \setminus e$ contains two connected components. This phenomenon is referred to as a {\em tree split}.

Using the orientation on $e = (e^+,e^-)$,
we denote by $(G \setminus e)^\pm$ the component that contains endpoint $e^\pm$. Similarly, given $e\in E$ and a vertex $v$, we denote by $(G \setminus e)^{v}$ the component of $G\setminus e$ containing $v$, and $(G\setminus e)^{\overline v}$ the other.

Tree splits can be used to express the path distances between vertices in a tree as we explain next.
Given an edge $e\in E$ and vertices $v,w \in V$, let 
\begin{equation*}
\delta(e;v,w) = \begin{cases}
	1 &\text{if $e$ separates  $v$ from $w$}, \\
	0 &\text{otherwise}.
\end{cases}
\end{equation*}

\begin{rmk}\label{rmk:delta-properties}
\hfill
\begin{enumerate}[label=(\roman*)]
\item If we fix an edge $e$, then $\delta(e; -, -): V \times V \to \{0, 1\}$ is the indicator function for vertex pairs $(v, w)$ that are on opposite sides of the tree split $G \setminus e$.

\item 
In particular, if we fix $e$ and a vertex $v$,
then $\delta(e;v, -) \colon V(G) \to \{0,1\}$ 
is the indicator function for the component 
$(G \setminus e)^{\overline v}$ of the tree split $G \setminus e$ not containing $v$.

\item If we fix vertices $v, w$, then $\delta(-;v,w) \colon E(G) \to \{0,1\}$
is the indicator function for the unique path between $v$ and $w$.
\end{enumerate}
\end{rmk}

Recall the distance between vertices $v,w$ (with respect to edge lengths $\{ \alpha_e \colon e \in E\}$), denoted by $d(v,w)$, is the sum of the edge lengths $\alpha_e$ along the unique path from $u$ to $v$. Now Remark~\ref{rmk:delta-properties} (iii) implies the following.

\begin{prop}
\label{prop:distance-sum}
For a tree $G = (V,E)$ with edge lengths $\{\alpha_e \colon e \in E\}$, the distance function satisfies
\[ 
	d(v,w) = \sum_{e \in E} \alpha_e \, \delta(e; v,w) .
\]
\end{prop}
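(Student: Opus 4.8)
The plan is to reduce the identity to the characterization of $\delta(-;v,w)$ as the indicator function of the unique $v$--$w$ path, which is exactly Remark~\ref{rmk:delta-properties}(iii). Since $G$ is a tree, between any two vertices $v,w$ there is a unique path $\gamma = \gamma(v,w)$, and by definition $d(v,w) = \sum_{e \in \gamma}\alpha_e$. Thus it suffices to prove that for every edge $e \in E$ one has $\delta(e;v,w) = 1$ if and only if $e$ is an edge of $\gamma$; granting this, $\sum_{e\in E}\alpha_e\,\delta(e;v,w) = \sum_{e\in\gamma}\alpha_e = d(v,w)$.

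First I would establish the ``only if'' direction (or its contrapositive): if $e$ is not an edge of $\gamma$, then $\gamma$ is still a walk from $v$ to $w$ inside $G\setminus e$, so $v$ and $w$ lie in the same connected component of $G\setminus e$; hence $e$ does not separate $v$ from $w$ and $\delta(e;v,w)=0$. Conversely, suppose $e$ is an edge of $\gamma$. Removing $e$ splits $G$ into exactly two components (the tree split $G\setminus e$ recalled in \S\ref{sec:tree-splits}), and the endpoints of $e$ lie in different components. Walking along $\gamma$ from $v$ to $w$, the path starts in one component, crosses $e$, and ends in the other; since $\gamma$ does not repeat edges and $e$ is a cut edge, $v$ and $w$ end up in different components of $G\setminus e$, so $\delta(e;v,w)=1$. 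Combining the two directions gives $\delta(e;v,w) = \one(e \in \gamma)$, completing the argument.

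I do not anticipate a genuine obstacle here: the statement is a direct unwinding of the definitions of $d$ and $\delta$, resting only on the two standard facts that in a tree the $v$--$w$ path is unique and that each edge is a cut edge whose deletion yields two components. The one point deserving a sentence of care is ruling out the possibility that $\gamma$ crosses $e$ more than once or returns to $v$'s side after crossing; this is handled immediately by the fact that $\gamma$, being a path in a tree, traverses each edge at most once and $e$ is a bridge. If desired, one can instead cite Remark~\ref{rmk:delta-properties}(iii) directly, in which case the proof collapses to the single line $\sum_{e\in E}\alpha_e\,\delta(e;v,w) = \sum_{e\in\gamma}\alpha_e = d(v,w)$.
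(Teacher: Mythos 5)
Your proposal is correct and follows the same route as the paper, which simply cites Remark~\ref{rmk:delta-properties}(iii) (that $\delta(-;v,w)$ is the indicator of the unique $v$--$w$ path); you additionally spell out the elementary cut-edge argument behind that remark, which is fine but not needed.
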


\subsection{Outdegree of forest components}
\label{sec:outdegree}

Given a vertex $v$ in a graph, the {\em degree} $\deg(v)$ is the number of edges incident to $v$.
A consequence of the ``handshaking lemma'' in graph theory is that for any tree $G$, we have
\begin{equation}\label{eq:handshake}
	\sum_{v \in V(G)} (2 - \deg(v)) = 2.
\end{equation}
We next state a slight generalization of \eqref{eq:handshake}, which will be useful later.

Given a connected subgraph $H \subseteq G$,
we define the {\em edge boundary} $\partial H$ as the set of edges which join $H$ to its complement; i.e.
\begin{equation*}
	\partial H = \{ e = \{a,b\} \in E \colon a \in V(H),\, b \not\in V(H)\}.
\end{equation*}
Define the {\em outdegree} of $H$ in $G$ as the number of edges in its edge boundary, $\outdeg(H) = |\partial H|$.
Note that the edge boundary and outdegree do not depend on the implicit orientation on $E$.

We are especially interested in the following special cases. 
Let $G$ be a tree and $\emptyset \ne S \subseteq V$. 
For an $S$-rooted spanning forest $F$ of $G$, and $s \in S$, define the {\em outdegree} $\outdeg(F,s)$ as the number of edges which join $F(s)$ (the $s$-component of $F$) to a different component.
Similarly, if $F$ is a $(S,*)$-rooted spanning forest of $G$, let $\outdeg(F,*)$ denote the outdegree of the floating component $F(*)$.
\begin{eg}\label{eg:outdegrees}
	Consider the tree $G$ from Example~\ref{eg:running}, where $S$ is the set of three leaf vertices.
	In Figure~\ref{fig:outdeg}, we show examples of rooted spanning forests of $G$, where each component is labelled by $\outdeg(F, s)$, for a leaf vertex $s$, or $\outdeg(F, *)$ for the floating component.

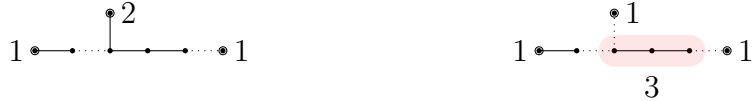
\begin{figure}[h]
	\begin{minipage}{0.45\textwidth}
		\centering
		\begin{tikzpicture}[scale=0.5]
			\coordinate (1) at (0,0);
			\coordinate (A) at (1,0);
			\coordinate (B) at (2,0);
			\coordinate (C) at (3,0);
			\coordinate (D) at (4,0);
			\coordinate (2) at (5,0);
			\coordinate (3) at (2,1);
			
			\foreach \c in {1,2,3,A,B,C,D} {
				\fill (\c) circle (2pt);
			}
			\foreach \c in {1,2,3} {
				\draw (\c) circle (3pt);
			}
		
			\draw (1) -- (A);
			\draw[dotted] (A) -- (B);
			\draw (B) -- (C);
			\draw (C) -- (D);
			\draw[dotted] (D) -- (2);
			\draw (B) -- (3);

			\node[left] at (1) {$1$};
			\node[right] at (3) {$2$};
			\node[right] at (2) {$1$};
			\node[below=0.2] at (C) {\phantom{$3$}}; 
		\end{tikzpicture}
	\end{minipage}
	\begin{minipage}{0.45\textwidth}
		\centering
		\begin{tikzpicture}[scale=0.5]
			\coordinate (1) at (0,0);
			\coordinate (A) at (1,0);
			\coordinate (B) at (2,0);
			\coordinate (C) at (3,0);
			\coordinate (D) at (4,0);
			\coordinate (2) at (5,0);
			\coordinate (3) at (2,1);
			
			\foreach \c in {1,2,3,A,B,C,D} {
				\fill (\c) circle (2pt);
			}
			\foreach \c in {1,2,3} {
				\draw (\c) circle (3pt);
			}
		
			\draw (1) -- (A);
			\draw[dotted] (A) -- (B);
			\draw[dotted] (B) -- (3);
			\draw (B) -- (C);
			\draw (C) -- (D);
			\draw[dotted] (D) -- (2);
		
			\draw[color=red, line width=12pt, cap=round, opacity=0.1] (B) -- (D);

			\node[left] at (1) {$1$};
			\node[right] at (3) {$1$};
			\node[right] at (2) {$1$};
			\node[below=0.2] at (C) {$3$};
		\end{tikzpicture}
	\end{minipage}
	\caption{An $S$-rooted spanning forest (left) and an $(S, *)$-rooted spanning forest (right).
	Each forest component is labelled with its outdegree.}
	\label{fig:outdeg}
\end{figure}
\noindent
	In consideration of the next lemma, we also show $G$ labelled with $2 - \deg(v)$ on each vertex, below.
\[
\begin{tikzpicture}[xscale=0.6,yscale=0.5]
	\coordinate (1) at (0,0);
	\coordinate (A) at (1,0);
	\coordinate (B) at (2,0);
	\coordinate (C) at (3,0);
	\coordinate (D) at (4,0);
	\coordinate (2) at (5,0);
	\coordinate (3) at (2,1);
	
	\foreach \c in {1,2,3,A,B,C,D} {
		\fill (\c) circle (2pt);
	}
	\foreach \c in {1,2,3} {
		\draw (\c) circle (3pt);
	}

	\draw (1) -- (A);
	\draw (A) -- (B);
	\draw (B) -- (3);
	\draw (B) -- (C);
	\draw (C) -- (D);
	\draw (D) -- (2);
	
	\foreach \pt/\dir/\label in {
		1/left/1,2/right/1,3/right/1,A/below/0,B/below/-1,C/below/0,D/below/0
	} {
		\node[\dir=0.2] at (\pt) {$\label$};
	}
\end{tikzpicture}
\]
\end{eg}

\begin{lem}\label{lem:outdeg-sum}
Suppose $G$ is a tree.
\begin{enumerate}
	\item
	If $H \subseteq G$ is a nonempty connected subgraph then
	\[
		\sum_{v \in V(H)} \left( 2 -  \deg(v) \right) = 2 - \outdeg(H) .
	\]
	\item 
	For any fixed edge $e$ and fixed vertex $u$ of $G$, we have
	\[
		\sum_{v \in V(G)} (2 - \deg(v))\, \delta(e; u,v) = 1.
	\]
\end{enumerate}
\end{lem}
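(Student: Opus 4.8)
The plan is to prove part (a) by a direct edge-counting argument, and then derive part (b) as a special case by specializing $H$ to one of the two components of a tree split.

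\medskip

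\textbf{Part (a).} First I would use the fact that $\sum_{v \in V(H)} \deg(v)$, where $\deg$ is taken in the ambient tree $G$, counts each edge with both endpoints in $H$ twice and each edge in the boundary $\partial H$ once; that is,
\[
	\sum_{v \in V(H)} \deg(v) = 2\,|E(H)| + \outdeg(H),
\]
where $E(H)$ denotes the set of edges of $G$ with both endpoints in $V(H)$, i.e. the edge set of the induced subgraph (which equals $E(H)$ since $H$ is the component structure in a forest/subtree context — more precisely, since $G$ is a tree and $H$ is connected, $H$ is itself a tree on its vertex set). Since $H$ is a nonempty tree, $|E(H)| = |V(H)| - 1$. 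Substituting,
\[
	\sum_{v \in V(H)} (2 - \deg(v)) = 2|V(H)| - \big(2(|V(H)| - 1) + \outdeg(H)\big) = 2 - \outdeg(H),
\]
which is the claim. The only subtlety to check is that a connected subgraph of a tree has no cycles, hence is a tree, so the Euler relation $|E| = |V| - 1$ applies; this is immediate.

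\medskip

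\textbf{Part (b).} Now fix an edge $e$ and a vertex $u$. By Remark~\ref{rmk:delta-properties}(ii), the function $\delta(e; u, -)$ is the indicator of the component $H := (G \setminus e)^{\overline u}$ of the tree split $G \setminus e$ not containing $u$. This $H$ is a nonempty connected subgraph of $G$ (it is one of the two pieces of $G \setminus e$), and its edge boundary in $G$ consists precisely of the single edge $e$, so $\outdeg(H) = 1$. Therefore
\[
	\sum_{v \in V(G)} (2 - \deg(v))\, \delta(e; u, v) = \sum_{v \in V(H)} (2 - \deg(v)) = 2 - \outdeg(H) = 2 - 1 = 1
\]
by part (a). This completes the proof.

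\medskip

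I do not anticipate a serious obstacle here: the argument is elementary edge-counting plus the Euler relation for trees. The one point requiring a little care is making sure that $\deg(v)$ in the sum is always the degree in the \emph{ambient} tree $G$ (not in $H$), so that the boundary edges are correctly accounted for — this is exactly what produces the $\outdeg(H)$ correction term, and it is what makes the lemma a genuine generalization of the handshaking identity \eqref{eq:handshake} (which is the case $H = G$, $\outdeg(G) = 0$).
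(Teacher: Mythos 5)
Your proposal is correct and follows essentially the same route as the paper: part (b) is derived exactly as in the paper, by recognizing $\delta(e;u,-)$ as the indicator of the component $(G\setminus e)^{\overline u}$, which has outdegree $1$, and applying part (a). The only difference is cosmetic: for part (a) the paper simply notes the identity follows by induction on $|V(H)|$, while you give an equivalent direct argument via the handshake count $\sum_{v\in V(H)}\deg_G(v)=2|E(H)|+\outdeg(H)$ together with $|E(H)|=|V(H)|-1$, correctly noting (as needed) that since $G$ is a tree and $H$ is connected, every edge of $G$ with both endpoints in $V(H)$ lies in $H$.
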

\begin{proof}\hspace{0pt}
\begin{enumerate}
	\item This is straightforward to check by induction on $|V(H)|$,
	with base case $|V(H)| = 1$:
	if $H = \{v\}$ consists of a single vertex, then $\outdeg(H) = \deg(v)$.
	\item Recall that $(G \setminus e)^{\overline u}$ denotes the component of the tree split $G \setminus e$ that does not contain $u$.
	Its vertices are precisely those $v$ that satisfy $\delta(e; u, v) = 1$ (see Remark~\ref{rmk:delta-properties} (iii)), so
	\[
		\sum_{v \in V} (2 - \deg(v)) \delta(e; u,v)
		= \sum_{v \in (G \setminus e)^{\overline u}} (2 - \deg(v)) .
	\]
	Since the subgraph $(G \setminus e)^{\overline u}$ has a single edge separating it from its complement, we have
	$\outdeg((G \setminus e)^{\overline u}) = 1$.
	Apply part (1) to obtain the result.
\end{enumerate}
\end{proof}

\subsection{Transitions between $\trees(G; S)$ and $\forests_2(G; S)$}
A key step in the proof of Theorem~\ref{thm:w-main} uses the following ``transition structure'' which relates the $S$-rooted spanning forests $\trees(G; S)$ with $(S, *)$-rooted spanning forests $\forests_2(G; S)$,
via the operations of edge-deletion and edge-union.

\begin{itemize}[leftmargin=*]
\item Consider the ``deletion map''
\[
	E(G) \times \trees(G;S) \xrightarrow{\mathrm{del}} \trees(G;S) \sqcup \forests_2(G;S)
\]
defined by
\[
	(e, T) \mapsto \begin{cases}
	T &\text{if } e\not\in T,\\
	T \setminus e &\text{if } e\in T.
	\end{cases}
\]
Given $F \in \forests_2(G;S)$,
there are exactly $\outdeg(F, *)$-many choices of pairs $(e, T) \in E(G) \times \trees(G;S)$ such that $F = T \setminus e$.
\medskip
\item Consider the ``union map'' 
\[
	E(G) \times \forests_2(G;S) \longrightarrow \trees(G;S) \sqcup \forests_2(G;S)
\]
defined by
\[
	(e, F) \mapsto \begin{cases}
		F \cup e &\text{if } e \in \partial F(*), \\
		F &\text{if } e \not\in \partial F(*)
	\end{cases}
\]
Given $T \in \trees(G; S)$,  
there are exactly $(|V| - |S|)$-many choices of pairs $(e, F) \in E(G) \times \forests_2(G; S)$ such that $T = F \cup e$.
\end{itemize}

\section{Principal submatrices of \texorpdfstring{$D$}{D} as bilinear forms}
\label{sec:distance_prelim}

In the rest of this paper, let $G = (V, E)$ be a tree with edge lengths $\alpha = \{\alpha_e \colon e\in E\}$. The {distance matrix} (with respect to $\alpha$) is symmetric, so it defines a symmetric bilinear forms on the vector space $\RR^V$.

\subsection{Signature and invertibility}\label{sec:signature}

For a subset of vertices $S$ with $|S| \geq 2$, the submatrix $D[S]$ has nonzero determinant. 
We give a proof in here, based on finding the signature of $D[S]$ as a bilinear form.

We first recall a celebrated result of Cauchy.
Let $M[\overline{i}]$ denote the matrix obtained from $M$ by deleting the $i$-th row and column.

\begin{prop}[Cauchy interlacing]
\label{prop:cauchy-interlacing}
Suppose $M$ is a symmetric real matrix  with eigenvalues $\lambda_1 \leq \cdots \leq \lambda_n$. 
Then the submatrix $M[\overline{i}]$ has eigenvalues $\mu_1 \leq \cdots \leq \mu_{n-1}$ satisfying
\[
	\lambda_1 \leq \mu_1 \leq \lambda_2 \leq \cdots \leq \mu_{n-1} \leq \lambda_n.
\]
\end{prop}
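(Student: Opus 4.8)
The statement to be proved is Cauchy interlacing (Proposition~\ref{prop:cauchy-interlacing}), so my plan is the classical variational argument via the Courant--Fischer min-max characterization of eigenvalues.

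The plan is to use the Courant--Fischer theorem: for a symmetric $n \times n$ matrix $M$ with eigenvalues $\lambda_1 \le \cdots \le \lambda_n$, one has
\[
	\lambda_k = \min_{\substack{W \subseteq \RR^n \\ \dim W = k}} \ \max_{\substack{x \in W \\ x \ne 0}} \frac{x^\tr M x}{x^\tr x}
	= \max_{\substack{W \subseteq \RR^n \\ \dim W = n - k + 1}} \ \min_{\substack{x \in W \\ x \ne 0}} \frac{x^\tr M x}{x^\tr x}.
\]
First I would set up the identification of $\RR^{n-1}$ with the hyperplane $H = \{x \in \RR^n : x_i = 0\}$, so that the quadratic form of $M[\overline{i}]$ is precisely the restriction of the quadratic form of $M$ to $H$, and the eigenvalues $\mu_1 \le \cdots \le \mu_{n-1}$ of $M[\overline i]$ are the Courant--Fischer values of that restricted form.

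For the inequality $\mu_k \le \lambda_{k+1}$: take a $(k+1)$-dimensional subspace $W \subseteq \RR^n$ realizing (or nearly realizing) the min in the formula for $\lambda_{k+1}$; then $W \cap H$ has dimension at least $k$, and the max of the Rayleigh quotient over any $k$-dimensional subspace of $W \cap H$ is at most the max over $W$, which bounds $\mu_k$ from above by $\lambda_{k+1}$. For the inequality $\lambda_k \le \mu_k$: use the dual (max-min) form. Take an $(n-k)$-dimensional subspace $W \subseteq \RR^n$ realizing the max in the max-min formula for $\lambda_k$ (so $\min$ of the Rayleigh quotient over $W$ is $\lambda_k$), intersect with $H$ to get a subspace of $H$ of dimension at least $n-k-1 = (n-1)-k$, on which the min of the Rayleigh quotient is still at least $\lambda_k$; since the min-quotient over an $((n-1)-k)$-dimensional subspace of $H$ is a lower bound in the max-min formula for $\mu_k$, we get $\lambda_k \le \mu_k$. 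Chaining these, $\lambda_1 \le \mu_1 \le \lambda_2 \le \mu_2 \le \cdots \le \mu_{n-1} \le \lambda_n$.

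The only real subtlety — the "main obstacle," such as it is — is the bookkeeping with subspace dimensions and the intersection-with-hyperplane step: one must be careful that $\dim(W \cap H) \ge \dim W - 1$ always holds (since $H$ has codimension $1$), and that a subspace of the stated dimension can always be chosen inside $W \cap H$, which is fine because any subspace contains subspaces of all smaller dimensions. Since this is a textbook result, I would most likely not reproduce the argument in full and instead simply cite a standard reference (e.g. Horn--Johnson, \emph{Matrix Analysis}, Theorem 4.3.17) for Proposition~\ref{prop:cauchy-interlacing}, as the paper does for the matrix-tree theorem.
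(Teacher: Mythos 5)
Your proposal is correct, and it matches the paper's treatment: the paper proves Proposition~\ref{prop:cauchy-interlacing} simply by citing \cite[Theorem 4.3.17]{horn-johnson}, exactly as you ultimately suggest doing. Your sketched Courant--Fischer argument (intersecting optimal subspaces with the coordinate hyperplane $H$, using $\dim(W\cap H)\ge \dim W -1$) is the standard correct proof, so including or omitting it is a matter of taste.
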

\begin{proof}
See, for example, \cite[Theorem 4.3.17]{horn-johnson}.
\end{proof}

\begin{lem}[{\cite[Lemma 8.15]{bapat-book}}]
\label{lem:dist-signature}
Suppose $\Da$ is the distance matrix of a tree with $n$ vertices and edge lengths $\{\alpha_e \colon e \in E\}$. 
Then $\Da$ has one positive eigenvalue and $n - 1$ negative eigenvalues.
\end{lem}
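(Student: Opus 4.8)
The plan is to prove the claim about the distance matrix $\Da$ of a tree having exactly one positive and $n-1$ negative eigenvalues by a two-part strategy: first establish that $\det \Da \neq 0$ (so no zero eigenvalues and the signature question is well-posed), and second, pin down the signs of the eigenvalues by exhibiting a subspace of the appropriate dimension on which the quadratic form is negative definite, together with a single vector on which it is positive.

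\textbf{Setting up the non-degeneracy and the inductive frame.} First I would argue that it suffices to prove the statement by induction on $n$, the number of vertices, with the key tool being the behavior of $\Da$ under adding a pendant (leaf) edge. The base case $n = 1$ is trivial ($\Da = (0)$, though here one should start at $n=2$ where $\Da = \begin{pmatrix} 0 & \alpha \\ \alpha & 0\end{pmatrix}$ has eigenvalues $\pm \alpha$, one positive and one negative). For the inductive step, write $G' = G \setminus v$ where $v$ is a leaf attached by an edge $e$ of length $\alpha_e$ to a vertex $u$. Then the distances satisfy $d(v, w) = \alpha_e + d(u,w)$ for all $w \neq v$. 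This lets me write $\Da$ (for $G$) as a rank-controlled modification of $\Da'$ (for $G'$) bordered by one extra row and column. Concretely, if we let $\boldu$ be the column vector with entries $d(u, w)$ for $w \in V(G')$, then the new row/column is $\alpha_e \bone + \boldu$ (with a $0$ in the diagonal slot).

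\textbf{Computing the effect of the pendant edge via congruence.} The main step is to perform a congruence transformation (row/column operations preserving symmetry) that reveals the signature change. Subtracting $\alpha_e$ times an appropriate combination, and then using the fact that $\boldu = \Da'$-column corresponding to $u$ (i.e., $\boldu = \Da' \boldm_u$ where $\boldm_u$ is a vector to be identified using the structure of tree distance matrices — note that $\Da'$ is invertible by the inductive hypothesis, so $\boldm_u = (\Da')^{-1}\boldu = \mathbf{e}_u$ is just the standard basis vector!), one can bring $\Da$ into block form $\Da' \oplus (c)$ up to congruence, where $c$ is an explicit scalar. Since $\boldu$ is literally the $u$-column of $\Da'$, after subtracting the $u$-row/column appropriately, the Schur-complement scalar works out to something one can compute directly: it should come out negative (this is where the "one positive, $n-1$ negative" pattern is preserved — we add one more negative eigenvalue at each leaf addition, after the $n=2$ base case which already has the one positive eigenvalue). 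I expect the scalar to be $-2\alpha_e$ or a similar manifestly negative quantity. By Sylvester's law of inertia, the signature of $\Da$ is that of $\Da'$ plus one negative, which closes the induction.

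\textbf{Alternative and the main obstacle.} An alternative to the inductive argument is the direct approach: exhibit the positive eigenvalue by testing against $\bone$ (since $\bone^\tr \Da \bone = \sum_{v,w} d(v,w) > 0$ for $n \geq 2$, the form is not negative semidefinite, so there is at least one positive eigenvalue), and then show the orthogonal complement behavior or use Cauchy interlacing (Proposition~\ref{prop:cauchy-interlacing}) to control the count. Combining interlacing with the induction: if $\Da'$ has one positive and $n-2$ negative eigenvalues, interlacing forces $\Da$ to have at most two positive eigenvalues and at most $n-1$ negative; ruling out the "two positive" case requires knowing $\det \Da \neq 0$ together with a dimension count, which circles back to needing the non-degeneracy. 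I expect the main obstacle to be the bookkeeping in the congruence/Schur-complement computation — correctly identifying the vector $\boldm_u$ and verifying the resulting scalar is negative — since sign errors here are easy and the whole conclusion hinges on that sign. The cleanest route is probably to cite \cite[Lemma 8.15]{bapat-book} directly as the statement is already attributed there, but if a self-contained proof is wanted, the pendant-edge induction via Sylvester's law of inertia is the way to go.
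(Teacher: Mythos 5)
Your primary (congruence/Schur complement) route has a concrete gap at exactly the step you flagged. Border $\Da'$ by the new row/column $\alpha_e\bone + \Da'\mathbf{e}_u$ and subtract the $u$-row and $u$-column from the $v$-row and $v$-column: the resulting matrix is \emph{not} $\Da'\oplus(c)$. The new $(v,v)$ entry is $-2\alpha_e$, but the off-diagonal block becomes $\alpha_e\bone$, not zero --- the observation that $(\Da')^{-1}(\Da'\mathbf{e}_u)=\mathbf{e}_u$ only removes the $\Da'\mathbf{e}_u$ part of the coupling, and the all-ones part survives. To close the induction you must pass to the Schur complement (Haynsworth inertia additivity), which is the scalar $c=-2\alpha_e-\alpha_e^2\,\bone^\tr(\Da')^{-1}\bone$, and its negativity is not ``manifest'': you need $\bone^\tr(\Da')^{-1}\bone>0$ (equivalently $\cof \Da'/\det \Da'>0$). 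That is an extra input you have not secured; it follows, e.g., from the inverse formula \eqref{eq:w-distance-inverse} of Graham--Lov\'asz and Bapat--Kirkland--Neumann, which gives $\bone^\tr(\Da')^{-1}\bone=2/\sum_{e\in E(G')}\alpha_e$ and hence $c=-2\alpha_e-2\alpha_e^2/\sum_{e\in E(G')}\alpha_e<0$, but without some such identity the sign of $c$ (and with it the whole inductive step) is unproved. So as written the main argument does not close, even though the intended conclusion (one extra negative eigenvalue per pendant edge) is correct.

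Your ``alternative'' paragraph is essentially the proof the paper points to: the cited Lemma 8.15 of Bapat's book argues by induction on $n$ using Cauchy interlacing (Proposition~\ref{prop:cauchy-interlacing}), with the edge-length case handled by the Bapat--Kirkland--Neumann determinant formula \eqref{eq:w-full-det}. Note that this removes the circularity you worry about: you do not need a separate non-degeneracy argument, because \eqref{eq:w-full-det} gives both $\det\Da\neq 0$ and, crucially, its sign $(-1)^{n-1}$. Interlacing from $\Da'$ (one positive, $n-2$ negative eigenvalues) yields at least $n-2$ negative and at least one positive eigenvalue of $\Da$, and the sign of $\det\Da$ then forces the one undetermined eigenvalue to be negative by parity of the number of negative eigenvalues. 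Fleshing that out gives a complete proof matching the paper's reference; the Schur-complement induction can also be repaired, but only after importing \eqref{eq:w-distance-inverse} or the cofactor identity, at which point it is no shorter.
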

\begin{rmk}
The proof in \cite{bapat-book} is by induction on the number of vertices, and uses Cauchy interlacing (Proposition~\ref{prop:cauchy-interlacing}). Lemma 8.15 in \cite{bapat-book} is actually stated for trees with all edge lengths $1$. However, the same argument applies to trees with edge lengths if one applies Bapat, Kirkland, Neumann's result \cite[Corollary 2.5]{bapat-kirkland-neumann} (see \eqref{eq:w-full-det}).
\end{rmk}
The following extension of Lemma~\ref{lem:dist-signature} and its proof was communicated to the authors by Bapat.
\begin{lem}
\label{lem:distance-sub-nonsingular}
Suppose $\Da$ is the distance matrix of a tree $G = (V,E)$ with edge lengths $\{\alpha_e \colon e \in E\}$. Let $S \subseteq V$ be a subset of size $|S| \geq 2$. 
Then $\Da[S]$ has one positive eigenvalue and $|S| - 1$ negative eigenvalues.
In particular, $\det \Da[S] \neq 0$.
\end{lem}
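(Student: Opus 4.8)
The plan is to deduce Lemma~\ref{lem:distance-sub-nonsingular} from Lemma~\ref{lem:dist-signature} by an interlacing argument, peeling off vertices one at a time. Since $\Da$ is the distance matrix of the tree $G$ on $n$ vertices, Lemma~\ref{lem:dist-signature} tells us $\Da$ has exactly one positive eigenvalue and $n-1$ negative eigenvalues (in particular no zero eigenvalue). Now $\Da[S]$ is obtained from $\Da$ by deleting the $n - |S|$ rows and columns indexed by $V \setminus S$, one at a time; applying Cauchy interlacing (Proposition~\ref{prop:cauchy-interlacing}) at each step controls how the inertia can change.

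First I would observe the following single-step fact: if $M$ is a symmetric real matrix of size $k \geq 2$ with exactly one positive eigenvalue, then for any index $i$ the matrix $M[\overline{i}]$ has \emph{at most} one positive eigenvalue, by interlacing ($\mu_{k-1} \leq \lambda_k$ is not what we need — rather, at most one $\mu_j$ can exceed $0$ since $\mu_j \leq \lambda_{j+1}$ and only $\lambda_k > 0$). Iterating this from $\Da$ down to $\Da[S]$, we conclude that $\Da[S]$ has at most one positive eigenvalue. On the other hand, $\Da[S]$ has a strictly positive diagonal entry is false since the diagonal is zero, so instead I would get a lower bound on the positive inertia from a $2\times 2$ principal submatrix: pick any two vertices $s_0, s_1 \in S$; the corresponding $2\times 2$ block of $\Da[S]$ is $\begin{pmatrix} 0 & d \\ d & 0 \end{pmatrix}$ with $d = d(s_0,s_1) > 0$, which has eigenvalues $\pm d$, hence one positive eigenvalue. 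By Cauchy interlacing applied in the other direction (a principal submatrix's positive inertia is a lower bound for the full matrix's positive inertia), $\Da[S]$ has \emph{at least} one positive eigenvalue. Combining, $\Da[S]$ has exactly one positive eigenvalue.

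It remains to rule out a zero eigenvalue, i.e. to show the remaining $|S| - 1$ eigenvalues are all strictly negative (equivalently $\det \Da[S] \neq 0$). Here I would argue by downward induction on $|S|$ starting from $|S| = n$, where the claim is the Graham--Pollak / Bapat--Kirkland--Neumann identity \eqref{eq:w-full-det} (or directly Lemma~\ref{lem:dist-signature}), so $\det \Da \neq 0$. For the inductive step, suppose the result holds for all vertex subsets of size $k+1$ and let $|S| = k \geq 2$. Choose any $s' \in V \setminus S$ (if $S \subsetneq V$) so that $S' = S \cup \{s'\}$ has size $k+1$ and $\Da[S] = \Da[S'][\overline{s'}]$; by the inductive hypothesis $\Da[S']$ has inertia $(1, k, 0)$. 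Cauchy interlacing then forces the eigenvalues $\mu_1 \leq \cdots \leq \mu_k$ of $\Da[S]$ to satisfy $\lambda_j \leq \mu_j \leq \lambda_{j+1}$ where $\lambda_1 \leq \cdots \leq \lambda_{k+1}$ are the eigenvalues of $\Da[S']$; since $\lambda_1, \ldots, \lambda_k < 0 < \lambda_{k+1}$, we get $\mu_j \leq \lambda_{j+1} < 0$ for $j = 1, \ldots, k-1$, and $\mu_k \geq \lambda_k$ could in principle be $0$. To exclude $\mu_k = 0$, I would instead invoke the already-established fact that $\Da[S]$ has exactly one positive eigenvalue together with a direct computation that $\Da[S]$ is nonsingular — the cleanest route being: the sum-of-cofactors formula \eqref{eq:cof-trees} of Bapat--Sivasubramanian gives $\cof \Da[S] = (-2)^{|S|-1} \kappa(G;S) \neq 0$, and if $0$ were an eigenvalue of the symmetric matrix $\Da[S]$ with nullvector $\boldu$, then $\cof \Da[S] = 0$ unless $\bone$ is orthogonal to the kernel; handling that last case requires knowing $\bone \notin \ker \Da[S]$, which holds because $\Da[S]\bone$ has a nonzero entry (each row sum of $\Da[S]$ is a sum of positive distances from one vertex of $S$ to the others, hence positive when $|S| \geq 2$). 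This shows $0$ is not an eigenvalue, completing the induction.

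The main obstacle is the clean exclusion of the zero eigenvalue: interlacing alone only pins down the inertia up to the behavior of the single ``boundary'' eigenvalue $\mu_k$, so the argument genuinely needs an independent nonvanishing input. I expect the smoothest version to bypass the induction entirely and argue nonsingularity directly from \eqref{eq:cof-trees} (noting $\kappa(G;S) \geq 1$ since $G$ is connected) combined with the row-sum positivity of $\Da[S]$; alternatively one can follow Bapat's communicated inductive proof verbatim. Either way, once $\det \Da[S] \neq 0$ is known, the signature count $(1, |S|-1)$ follows immediately from the interlacing bounds above.
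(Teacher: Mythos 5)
Your interlacing bookkeeping is correct: iterating Cauchy interlacing down from Lemma~\ref{lem:dist-signature} shows $D[S]$ has at most one positive eigenvalue, the $2\times 2$ block $\bigl(\begin{smallmatrix}0&d\\d&0\end{smallmatrix}\bigr)$ gives at least one, and in the inductive step interlacing from $D[S']$ (inertia $(1,k,0)$) gives $\mu_1\le\cdots\le\mu_{k-1}<0$. The genuine gap is in how you exclude $\mu_k=0$. The inference ``$\cof D[S]\neq 0$ and $\bone\notin\ker D[S]$, hence $D[S]$ is nonsingular'' is not valid. If a symmetric matrix $M$ has one-dimensional kernel spanned by a unit vector $\boldu$, its adjugate equals $\bigl(\prod_{\lambda_i\neq 0}\lambda_i\bigr)\boldu\boldu^\tr$, so $\cof M=\bigl(\prod_{\lambda_i\neq 0}\lambda_i\bigr)(\bone^\tr\boldu)^2$; this is nonzero whenever $\bone^\tr\boldu\neq 0$, and knowing that $\bone$ itself is not a null vector says nothing about whether such a $\boldu$ exists (you also state the dichotomy backwards: $\cof$ vanishes when $\bone$ \emph{is} orthogonal to the kernel, not ``unless''). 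Concretely, $M=\left(\begin{smallmatrix}1&2\\2&4\end{smallmatrix}\right)$ has positive row sums and $\cof M=1\neq 0$, yet it is singular; so the ``smoothest version'' in your closing paragraph, deducing nonsingularity directly from \eqref{eq:cof-trees} plus row-sum positivity, does not go through either. (The cofactor route could be repaired by a sign count: if $D[S]$ were singular with exactly one positive eigenvalue, the product of its nonzero eigenvalues would have sign $(-1)^{|S|}$, whereas $(-2)^{|S|-1}\kappa(G;S)$ has sign $(-1)^{|S|-1}$; but you did not make this argument.)

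The fix is already in your hands and needs no cofactor input: you established that $D[S]$ has \emph{exactly} one positive eigenvalue, and in the inductive step that $\mu_1,\dots,\mu_{k-1}<0$; since the unique positive eigenvalue cannot be among $\mu_1,\dots,\mu_{k-1}$, it must be $\mu_k$, so $\mu_k>0$ and no zero eigenvalue occurs. The paper closes the same gap even more economically, without your first paragraph at all: after interlacing from $D[S^+]$ yields $k-1$ strictly negative eigenvalues, it observes that $D[S]$ has zero diagonal, hence zero trace, so the remaining eigenvalue equals minus a strictly negative sum and is therefore positive. That single trace observation replaces both your two-sided interlacing argument and the cofactor machinery.
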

\begin{proof}
We apply decreasing induction on the size of $S$. 
For $S = V$, we have Lemma~\ref{lem:dist-signature}.
Suppose $|S| = k$, where $2 \leq k < n$, and assume the claim holds for all vertex subsets of size greater than $k$.
Let $S^+ \subseteq V$ be a set of $k + 1$ vertices containing $S$.
By induction hypothesis $D[S^+]$ has $k$ negative eigenvalues and one positive eigenvalue.
By Cauchy interlacing (Proposition~\ref{prop:cauchy-interlacing}), $D[S]$ must have at least $k - 1$ negative eigenvalues. 
Since all diagonal entries of $D[S]$ are zero, $D[S]$ has zero trace.  Thus the remaining eigenvalue of $D[S]$ must be positive.
\end{proof}

\subsection{Negative definite hyperplane}

We next prove that any principal submatrix of $D$ induces a negative semidefinite quadratic form on the hyperplane of zero-sum vectors. In other words, $D$ is a {\em conditionally} negative semidefinite matrix.

Bapat, Kirkland, and Neumann (\cite[Theorem 2.1]{bapat-kirkland-neumann}) prove that
\begin{equation}\label{eq:w-distance-inverse}
	(\Da)^{-1} \;=\; - \frac12 \La + \frac12 \Big( \sum_{e \in E} \alpha_e\Big)^{-1} \boldm\, \boldm^\tr
\end{equation}
where $\boldm$ is the vector with components $\boldm_v = 2 - \deg v$.
The special case of \eqref{eq:w-distance-inverse} for trees with all edge lengths $1$ had appeared in an earlier work by Graham and Lov\'asz (\cite[Lemma 1]{graham-lovasz}).

\begin{prop}
\label{prop:dist-laplacian}
Let $D$ denote the distance matrix of a tree with edge lengths $\{\alpha_e \colon e \in E\}$. Let $\La$ be the Laplacian matrix. 
Then
\[
	\Da \;=\; - \frac{1}{2} \Da \La \Da + \frac{1}{2} \left( \sum_{e \in E} \alpha_e\right) \bone \bone^\tr .
\]
\end{prop}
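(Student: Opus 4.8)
The plan is to derive this identity directly from the inverse formula \eqref{eq:w-distance-inverse} of Bapat--Kirkland--Neumann. By Lemma~\ref{lem:distance-sub-nonsingular} (applied with $S = V$, or simply Lemma~\ref{lem:dist-signature}), the distance matrix $\Da$ is invertible, so it suffices to manipulate \eqref{eq:w-distance-inverse} algebraically. Starting from
\[
	(\Da)^{-1} \;=\; - \frac12 \La + \frac12 \Big( \sum_{e \in E} \alpha_e\Big)^{-1} \boldm\, \boldm^\tr ,
\]
I would multiply on both the left and the right by $\Da$. This gives
\[
	\Da \;=\; \Da (\Da)^{-1} \Da \;=\; -\frac12 \Da \La \Da + \frac12 \Big( \sum_{e \in E} \alpha_e\Big)^{-1} (\Da \boldm)(\boldm^\tr \Da) .
\]
So the whole statement reduces to the single claim that $\Da \boldm = \big(\sum_{e\in E}\alpha_e\big)\,\bone$, i.e. that the row sums of $\Da$ weighted by the entries $\boldm_v = 2 - \deg(v)$ all equal the total edge length; equivalently $\Da\boldm$ is a constant vector with that constant.

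To verify $\Da\boldm = \big(\sum_{e}\alpha_e\big)\bone$, fix a vertex $u$ and compute the $u$-th entry of $\Da\boldm$:
\[
	(\Da\boldm)_u \;=\; \sum_{v \in V} d(u,v)\,(2 - \deg(v)) \;=\; \sum_{v \in V} \Big( \sum_{e \in E} \alpha_e\, \delta(e;u,v) \Big) (2 - \deg(v)),
\]
using Proposition~\ref{prop:distance-sum}. Swapping the order of summation, this is
\[
	\sum_{e \in E} \alpha_e \sum_{v \in V} (2 - \deg(v))\,\delta(e;u,v) \;=\; \sum_{e \in E} \alpha_e \cdot 1 \;=\; \sum_{e \in E}\alpha_e,
\]
where the inner sum equals $1$ by Lemma~\ref{lem:outdeg-sum}(b). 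Since this holds for every $u$, we conclude $\Da\boldm = \big(\sum_{e\in E}\alpha_e\big)\bone$, hence $(\Da\boldm)(\boldm^\tr\Da) = \big(\sum_e\alpha_e\big)^2\,\bone\bone^\tr$, and substituting back yields exactly the asserted formula.

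I do not anticipate a genuine obstacle here: the argument is a short chain of algebraic substitutions resting on results already established earlier in the paper. The one point requiring a little care is the summation swap and the invocation of Lemma~\ref{lem:outdeg-sum}(b)—one must make sure the lemma is stated in the form $\sum_{v}(2-\deg(v))\delta(e;u,v) = 1$ for \emph{every} choice of edge $e$ and vertex $u$, which it is. An alternative route, avoiding the explicit inverse, would be to verify the identity directly entry-by-entry using Proposition~\ref{prop:distance-sum} and a second application of Lemma~\ref{lem:outdeg-sum}, but the route through \eqref{eq:w-distance-inverse} is cleaner and I would present that one.
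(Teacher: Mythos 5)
Your proposal is correct, and its overall structure coincides with the paper's: both start from the Bapat--Kirkland--Neumann formula \eqref{eq:w-distance-inverse}, multiply by $\Da$ on both sides, and reduce everything to the single identity $\Da\boldm = \big(\sum_{e\in E}\alpha_e\big)\bone$. The only divergence is in how that identity is justified. The paper obtains it from \eqref{eq:w-distance-inverse} itself, by multiplying by the all-ones vector and using $\La\bone = 0$ together with $\boldm^\tr\bone = 2$ (the handshake identity \eqref{eq:handshake}), which is a one-line deduction. You instead verify it directly, entry by entry, via the tree-split expansion of distances (Proposition~\ref{prop:distance-sum}) and Lemma~\ref{lem:outdeg-sum}(b); this is also valid, slightly longer, but has the mild advantage of giving an independent, combinatorial proof of $\Da\boldm = \big(\sum_e \alpha_e\big)\bone$ that does not lean on the inverse formula a second time (and it previews the style of computation used later in Theorem~\ref{thm:m-distance-product}). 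Either route is fine; no gaps.
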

\begin{proof}
Multiply \eqref{eq:w-distance-inverse} by the all-ones vector $\bone$; since $\La \bone = 0$ and $\boldm^\tr \bone = 2$ (see \eqref{eq:handshake}), we obtain 
\[
	(\Da)^{-1} \bone \;=\; \Big( \sum_{e \in E} \alpha_e\Big)^{-1} \boldm .
\]
Therefore, 
\begin{equation}\label{eq:Dm}
\Da \boldm = \left( \sum_{e \in E} \alpha_e \right) \bone .
\end{equation}
The result now follows from multiplying \eqref{eq:w-distance-inverse} by $\Da$ on both sides, and using \eqref{eq:Dm}. 
\end{proof}

\begin{prop}
\label{prop:negdef-hyperplane}
Suppose $D$ is a distance matrix as above. 
Let $S \subset V$ be a nonempty vertex subset.
Let $H_{c, S} \subset \RR^S$ be the affine hyperplane of vectors whose coordinates sum to $c$.
Then the function $\boldu \mapsto \boldu^\tr D[S] \boldu$ is concave on $H_{c, S}$.
\end{prop}
\begin{proof}
Suppose $\boldu$ is in $H_{c, S}$, so that $\bone^\tr \boldu = c$.
Let $\boldw \in \RR^V$ be image of $\boldu$ under the natural inclusion $\RR^S \to \RR^V$.
Using Proposition~\ref{prop:dist-laplacian}, we obtain
\[
	\boldu^\tr D[S]\boldu = \boldw^\tr D \boldw = - \frac12 (D \boldw)^\tr L (D \boldw) + \frac12 \left(\sum \alpha_e\right) c^2.
\]
It is well-known (and readily follows from \eqref{eq:weighted-laplacian}) that the Laplacian matrix $L$ is positive semidefinite. Note that as $\boldu$ varies in $H_{c, S}$, the term $\frac12 \left(\sum \alpha_e\right) c^2$ remains constant.
\end{proof}

\section{Quadratic optimization}
\label{sec:optimization}

We explain how the quantity $({\det D[S]}/{\cof D[S]})$, introduced in \S\ref{sec:Normalized principal minors}, arises as the solution of a quadratic optimization problem.
This is used to prove Theorem~\ref{thm:monotonic}.
The material in this section is not used in the proofs of Theorems~\ref{thm:main} and \ref{thm:w-main}.

\begin{prop}
\label{prop:optimization}
Suppose $D$ is the distance matrix of a tree with edge lengths $\{\alpha_e \colon e \in E\}$. 
If $D[S]$ is the principal submatrix of a distance matrix indexed by a nonempty subset $S$ of vertices, then
\[
	\frac{\det D[S]}{\cof D[S]} = \max \{\boldu^\tr D[S] \boldu \colon \boldu \in \RR^S,\, \bone^\tr \boldu = 1 \}.
\]
\end{prop}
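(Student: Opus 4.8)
The plan is to solve the constrained optimization problem directly using Lagrange multipliers, leveraging the concavity already established in Proposition~\ref{prop:negdef-hyperplane}(b). First I would note that the constraint set $\{\boldu \in \RR^S \colon \bone^\tr \boldu = 1\}$ is exactly the affine hyperplane $H_{1, S}$, on which $\boldu \mapsto \boldu^\tr D[S] \boldu$ is concave; moreover, since $D[S]$ has a negative eigenvalue (Lemma~\ref{lem:distance-sub-nonsingular}, using $|S| \geq 2$; the case $|S| = 1$ is trivial as $D[\{v\}] = (0)$ and $\cof = 1$), the restriction to $H_{1,S}$ is in fact \emph{strictly} concave, so the maximum exists and is attained at the unique critical point. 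I would then set up the Lagrangian $\mathcal{L}(\boldu, \lambda) = \boldu^\tr D[S] \boldu - \lambda(\bone^\tr \boldu - 1)$ and obtain the stationarity condition $2 D[S] \boldu = \lambda \bone$, i.e. $\boldu = \tfrac{\lambda}{2} D[S]^{-1} \bone$ (invertibility is guaranteed by Lemma~\ref{lem:distance-sub-nonsingular}).

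Next I would solve for $\lambda$ using the constraint: $1 = \bone^\tr \boldu = \tfrac{\lambda}{2} \bone^\tr D[S]^{-1} \bone$, so $\lambda = 2 / (\bone^\tr D[S]^{-1} \bone)$, which requires checking $\bone^\tr D[S]^{-1} \bone \neq 0$. This is where I would invoke the relation $\cof D[S] = (\det D[S])(\bone^\tr D[S]^{-1} \bone)$ from \S\ref{sec:Normalized principal minors}, together with $\cof D[S] \neq 0$ — the latter follows from \eqref{eq:cof-trees} (Bapat--Sivasubramanian), since $\kappa(G;S) = \sum_{T} w(T) > 0$ as edge lengths are positive and $\trees(G;S)$ is nonempty. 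Then I would evaluate the objective at the critical point: $\boldu^\tr D[S] \boldu = \tfrac{\lambda}{2} \boldu^\tr (2 D[S]\boldu) / 2 \cdot \tfrac{1}{1}$ — more carefully, $\boldu^\tr D[S]\boldu = \tfrac12 \boldu^\tr(2D[S]\boldu) = \tfrac12 \boldu^\tr (\lambda \bone) = \tfrac{\lambda}{2}(\bone^\tr \boldu) = \tfrac{\lambda}{2}$. Substituting $\lambda = 2/(\bone^\tr D[S]^{-1}\bone)$ gives the maximum value $1/(\bone^\tr D[S]^{-1}\bone) = \det D[S] / \cof D[S]$, as desired.

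The main obstacle I anticipate is not any single computation but rather the justification that the Lagrange critical point genuinely yields the \emph{maximum} over the affine hyperplane (as opposed to a saddle or minimum), and that this maximum is finite. This is precisely why Proposition~\ref{prop:negdef-hyperplane}(b) is the crucial input: concavity of the restricted quadratic form ensures any stationary point is a global maximum on $H_{1,S}$, and strict concavity (from the negative eigenvalue count) ensures uniqueness and that the value is not $+\infty$. A secondary technical point worth being careful about is the sign and non-vanishing of $\bone^\tr D[S]^{-1}\bone$; I would handle this cleanly by the cofactor identity and the positivity of $\kappa(G;S)$ rather than by trying to analyze the inverse matrix directly. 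If one wanted to avoid Lagrange multipliers entirely, an alternative is to parametrize $H_{1,S}$ by a fixed point plus $H_{0,S}$ and complete the square, but the multiplier argument is shorter given the tools already in hand.
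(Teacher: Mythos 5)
Your proposal is correct and follows essentially the same route as the paper: Lagrange multipliers combined with the concavity on the affine hyperplane (Proposition~\ref{prop:negdef-hyperplane}(b)), invertibility of $D[S]$ (Lemma~\ref{lem:distance-sub-nonsingular}), and the identity $\cof D[S] = (\det D[S])(\bone^\tr D[S]^{-1}\bone)$, with the $|S|=1$ case handled separately. Your extra care in verifying $\bone^\tr D[S]^{-1}\bone \neq 0$ via \eqref{eq:cof-trees} is a welcome refinement (note that strict concavity is not really forced by the eigenvalue count alone but by this same non-vanishing; in any case plain concavity plus your explicit critical point already suffices).
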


\begin{proof}
If $|S| = 1$ then $D[S]$ is the zero matrix and the statement is true trivially.

Assume $|S| \geq 2$.
Proposition~\ref{prop:negdef-hyperplane} implies that 
the objective function $\boldu \mapsto \boldu^\tr D[S]\boldu$ is concave on the domain $\bone^\tr \boldu = 1$, so any critical point is a global maximum.
To find the critical points using the method of Lagrange multipliers, we compute the gradients of the objective function and the constraint:
\[\nabla(\boldu^\tr D[S] \boldu)=2 D[S] \boldu, \nabla(\bone^\tr \boldu)=\bone\]
Then the optimal solution $\boldu^*$ is a vector satisfying
\[
	D[S] \boldu^* = \lambda \bone \qquad\text{for some }\lambda \in \RR.
\]
The constant $\lambda$ is, in fact, the optimal objective value, since
\[
	(\boldu^*)^\tr D[S] \boldu^* = (\boldu^*)^\tr (\lambda \bone) = \lambda (\bone^\tr \boldu^*)^\tr = \lambda.
\]
The last equality uses the given constraint $\bone^\tr \boldu = 1$.

On the other hand,
 $D[S]$ is invertible by Lemma~\ref{lem:distance-sub-nonsingular}. Therefore, we have $ \boldu^* = \lambda (D[S]^{-1} \bone) $, so
\[
	1 = \bone^\tr \boldu^* = \lambda (\bone^\tr D[S]^{-1} \bone)
	= \lambda \frac{\cof D[S]}{\det D[S]}.
\]
Thus the optimal objective value is
$\displaystyle
	\lambda = \frac{\det D[S]}{\cof D[S]} .
$
\end{proof}

\begin{rmk}
	If we consider $G$ as a resistive electrical network, with each edge $e$ representing a resistor of resistance $\alpha_e$,
	then the optimal vector $\boldu^*$ has a physical interpretation as a current flow: 
	it relates to the currents exiting from the nodes in $S$ when external current enters the network in the amount $(\deg(v) - 2)/2$ for each $v \in V \backslash S$, 
	and the network is grounded at all nodes in $S$.
	To be more precise, the entry of $\boldu^*$ at some $s \in S$ is equal to $(2 - \deg(s)) / 2$, minus the current described above.

	Indeed, we give an explicit combinatorial expression for $\boldu^*$, up to a normalizing constant, in \eqref{eq:m-vector}. 
	It is a classical result in network theory that this relates to the current flow as described; 
	see, for example, Tutte~\cite[Section VI.6]{tutte}.

	The vector in $\RR^V$ whose $v$-entry is $(2 - \deg(v))/2$ is called the {\em canonical measure} of the underlying tree by Chinburg--Rumely; see~\cite[Section 14]{baker-rumely}.
	We may consider $\boldu^*$ as an analogue of the {canonical measure} for the vertex subset $S$.
\end{rmk}

We note the following restatement of Proposition~\ref{prop:optimization}, viewing $\RR^S$ as a subspace of $\RR^V$ 
where coordinates indexed by $V \setminus S$ are set to zero.
\begin{cor}
\label{cor:optimization}
If $D$ is the distance matrix of a tree and  $S \subset V$, then
\[
	\frac{\det D[S]}{\cof D[S]} = \max \{\boldu^\tr D \boldu \colon \boldu \in \RR^V,\, \bone^\tr \boldu = 1,\, \boldu_v = 0 \text{ if } v \not\in S \}.
\]
\end{cor}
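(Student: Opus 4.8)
The plan is to deduce Corollary~\ref{cor:optimization} directly from Proposition~\ref{prop:optimization} by matching the two optimization problems term for term. First I would set up the correspondence: given $\boldu \in \RR^V$ with $\boldu_v = 0$ for all $v \notin S$, write $\boldu|_S \in \RR^S$ for its restriction to the $S$-coordinates; conversely, any $\boldv \in \RR^S$ extends uniquely to such a $\boldu$ by zero-padding. This is a bijection between the two feasible sets, since $\bone^\tr \boldu = \sum_{v \in V} \boldu_v = \sum_{v \in S} \boldu_v = \bone^\tr (\boldu|_S)$, so the constraint $\bone^\tr \boldu = 1$ on the padded vector is equivalent to $\bone^\tr \boldv = 1$ on $\boldv$.

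The second step is to check that the objective functions agree under this bijection. If $\boldu$ is the zero-padding of $\boldv \in \RR^S$, then $\boldu^\tr D \boldu = \sum_{v, w \in V} \boldu_v D_{v,w} \boldu_w = \sum_{v, w \in S} \boldv_v D_{v,w} \boldv_w = \boldv^\tr D[S] \boldv$, because every term with an index outside $S$ is killed by a zero coordinate of $\boldu$, and for indices inside $S$ the entry $D_{v,w}$ is by definition the corresponding entry of $D[S]$. Hence the two maximization problems have identical feasible sets (under the bijection) and identical objective values, so their suprema coincide; together with Proposition~\ref{prop:optimization} this gives the claimed equality.

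There is essentially no obstacle here — the statement is a routine reindexing, and the only things to be slightly careful about are that the bijection really is onto (the zero-padding condition $\boldu_v = 0$ for $v \notin S$ is exactly what makes the inverse map well-defined) and that the maximum in Proposition~\ref{prop:optimization} is attained, which it is by the concavity argument already given there (so ``max'' rather than ``sup'' is justified in both formulations). I would present this as a one-paragraph proof: introduce the zero-padding map, observe it is a bijection of feasible sets preserving the objective, and invoke Proposition~\ref{prop:optimization}.
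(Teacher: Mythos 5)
Your proof is correct and is exactly the paper's intended argument: the paper states Corollary~\ref{cor:optimization} as a restatement of Proposition~\ref{prop:optimization} obtained by viewing $\RR^S$ as the subspace of $\RR^V$ with coordinates outside $S$ set to zero, which is precisely your zero-padding bijection. Your extra check that the padded objective $\boldu^\tr D \boldu$ restricts to $\boldu^\tr D[S]\boldu$ and that the constraint is preserved is the routine verification the paper leaves implicit.
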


\subsection{Cofactor sums}

Next we recall a connection between minors of the Laplacian matrix and cofactor sums of the distance matrix, when $G$ is a tree.
The result is essentially due to Bapat and Sivasubramanian (\cite{bapat-sivasubramanian}).

Recall from \S\ref{sec:Normalized principal minors} that $\cof M$ for a square matrix $M$ denotes the {\em sum of cofactors} of $M$, i.e.
$\displaystyle
	\cof M = \sum_{i = 1}^{n} \sum_{j = 1}^{n} (-1)^{i + j} \det M[\overline{i}, \overline{j}]
$
where $M[\overline{i}, \overline{j}]$ denotes the matrix with the $i$-th row and $j$-th column deleted.
Recall that $\kappa(G; S)  :=  \sum_{T \in \trees(G; S)} w(T)$.

\begin{thm}[Distance submatrix cofactor sums]
\label{thm:distance-sub-cof}
Let $G = (V, E)$ be a tree with edge lengths $\{\alpha_e \colon e \in E\}$,
and let $\Da$ be the distance matrix of $G$.
Let $S \subseteq V$ be a nonempty subset of vertices. 
Then
\begin{equation*}
	\cof \Da[S] = (-2)^{|S| - 1} \kappa(G; S) .
\end{equation*}
\end{thm}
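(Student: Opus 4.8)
The plan is to express both $\cof \Da[S]$ and $\kappa(G;S)$ via the Laplacian and then reduce to the principal-minors matrix-tree theorem (Theorem~\ref{thm:matrix-tree}). The key algebraic identity is that $\cof M = (\det M)(\bone^\tr M^{-1}\bone)$ whenever $M$ is invertible; by Lemma~\ref{lem:distance-sub-nonsingular} this applies to $M = \Da[S]$ when $|S|\ge 2$ (the case $|S|=1$ gives $\cof$ of a $1\times 1$ zero matrix, which is $1$, matching $(-2)^0\kappa(G;\{q\}) = $ number of spanning trees — wait, that needs care, so I would actually handle $|S|=1$ by the convention that $\cof$ of the empty minor structure is $1$; more honestly, $\cof[0] = 1$ since the only cofactor is the determinant of the empty matrix). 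So assume $|S|\ge 2$.

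First I would compute $\bone^\tr \Da[S]^{-1}\bone$. The difficulty is that $\Da[S]^{-1}$ is \emph{not} simply a submatrix of $\Da^{-1}$, so the clean formula \eqref{eq:w-distance-inverse} for $(\Da)^{-1}$ does not immediately restrict. To get around this, I would instead use Corollary~\ref{cor:optimization} together with Proposition~\ref{prop:optimization}: we have $\det\Da[S]/\cof\Da[S] = \lambda$, where $\boldu^*$ solves $\Da[S]\boldu^* = \lambda\bone$ with $\bone^\tr\boldu^* = 1$. Equivalently, I want to produce an explicit vector $\boldu$ supported on $S$ with $\Da\boldu$ constant on $S$-coordinates, and read off the ratio. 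A natural candidate comes from \eqref{eq:Dm}: the vector $\boldm$ with $\boldm_v = 2 - \deg v$ satisfies $\Da\boldm = (\sum_e\alpha_e)\bone$ on all of $V$. This $\boldm$ is generally not supported on $S$, but one can correct it by a vector in the kernel-like directions; alternatively — and this is the route I'd actually take — work directly with the harmonic/potential-theoretic vector $\boldm^{(S)}$ obtained by solving a Dirichlet problem, i.e. the discrete equilibrium measure of $S$. Concretely, define $\boldm^{(S)}_v$ for $v\in S$ via the currents flowing out of $S$ when current $(\deg v - 2)/1$ enters at interior vertices; this is exactly the vector $\boldm$ referenced in \S\ref{sec:distance_proofs} and mentioned in the remark after Proposition~\ref{prop:optimization}.

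Then the crux is a combinatorial identity: $\cof \Da[S] = (-2)^{|S|-1}\kappa(G;S)$ should follow from $\Da[S]\cdot(\text{adjugate-type combination}) $ paired against $\bone$, together with the matrix-tree theorem $\kappa(G;S) = (\prod_e\alpha_e)\det L[\overline S]$. The cleanest path: using the block structure of $L$ relative to the partition $V = S \sqcup \overline S$ and the known relation between $\Da$ and $L$ (Proposition~\ref{prop:dist-laplacian}: $\Da = -\tfrac12\Da L\Da + \tfrac12(\sum\alpha_e)\bone\bone^\tr$), restrict attention to $S$-coordinates and use Schur complements. The Schur complement of $L[\overline S]$ in $L$ is (up to sign/scaling) the effective Laplacian on $S$, whose determinant relates to $\kappa(G;S)$ by matrix-tree, and whose inverse relates to $\Da[S]$ by the effective-resistance identity $R_{\mathrm{eff}} = \tfrac12(\text{distance})$ on a tree. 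Tracking the all-ones vector through these manipulations — using $L\bone = 0$ and $\boldm^\tr\bone = 2$ — should produce the factor $(-2)^{|S|-1}$ and the count $\kappa(G;S)$.

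The main obstacle I anticipate is the bookkeeping in the Schur-complement step: one must carefully relate $\bone^\tr\Da[S]^{-1}\bone$ to $\det L[\overline S]/\det L[\overline{S\cup\{v\}}]$-type ratios without the convenient global inverse formula, and keep the orientation-dependent signs straight. A possible shortcut, which I'd try first, is to cite or reprove the Bapat--Sivasubramanian argument directly: they establish \eqref{eq:cof-trees} by an inductive peeling of leaves (adding one leaf to the tree changes $\cof \Da[S]$ and $\kappa(G;S)$ in compatible ways), and adapting that induction to the $|S|$-rooted-forest setting is likely more robust than the linear-algebra route. So the final plan is: (1) dispatch $|S|=1$ by convention; (2) for $|S|\ge 2$, set up induction on the number of vertices not in $S$ (or on leaves of $\conv(S,G)$), with base case $\conv(S,G)$ having $S$ as its leaf set handled by a direct computation or by \eqref{eq:w-full-det}; (3) in the inductive step, remove a leaf edge $e$ of $\conv(S,G)$ and split $\kappa(G;S)$ into forests using $e$ versus not, matching each term against the corresponding cofactor expansion of $\Da[S]$ — the $\outdeg$/$(|V|-|S|)$ multiplicities from \S\ref{sec:graphs-matrices} are exactly what make the two sides balance.
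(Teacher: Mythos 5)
Your proposal does not yet close, and the only route in it that would close is the one you relegate to a fallback: citing Bapat--Sivasubramanian. That is exactly the paper's proof, which quotes their Theorem~11 in the form $\cof D[S] = (-2)^{|S|-1}\big(\prod_{e\in E}\alpha_e\big)\det L[\overline S]$ and then converts $\det L[\overline S]$ into $\kappa(G;S)$ via the matrix-tree theorem (Theorem~\ref{thm:matrix-tree}). If you intend to cite, the citation plus Theorem~\ref{thm:matrix-tree} is the whole argument; everything else in your write-up is either not needed or not yet a proof.

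The routes you actually develop have concrete gaps. The optimization/equilibrium-vector route is circular within this paper's logical structure: Proposition~\ref{prop:optimization} (and likewise Theorem~\ref{thm:m-distance-product} with Proposition~\ref{prop:m-sum}) only yield the \emph{ratio} $\det D[S]/\cof D[S]$, i.e. $\bone^\tr D[S]^{-1}\bone$ up to inversion; to isolate $\cof D[S]$ you need $\det D[S]$ independently, but in the paper $\det D[S]$ (Theorem~\ref{thm:w-main}) is deduced \emph{from} the cofactor theorem in step (IV) of the proof outline, so as set up you are assuming what you want to prove. The Schur-complement route is the right moral picture (the Schur complement $L/L[\overline S]$ is the Laplacian of the network reduced to $S$, whose resistance matrix is $D[S]$), but the step you describe as ``tracking the all-ones vector should produce the factor $(-2)^{|S|-1}$ and the count $\kappa(G;S)$'' is precisely the content of the Bapat--Sivasubramanian theorem, not a bookkeeping afterthought; note also that Proposition~\ref{prop:dist-laplacian} is an identity of full $V\times V$ matrices whose $S\times S$ block involves entries $D_{s,v}$ with $v\notin S$, so ``restrict to $S$-coordinates'' is not a mechanical move. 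Finally, the leaf-peeling induction is mis-anchored: the base case you name ($\conv(S,G)$ having $S$ as its leaf set) is not handled by \eqref{eq:w-full-det}, which computes $\det D$ when $S=V$ rather than any cofactor sum, and it is in fact the generic hard case; the natural base case is $S=V(\conv(S,G))$, where $\cof D=(-2)^{n-1}\prod_e\alpha_e$ already requires an argument (e.g. combining \eqref{eq:w-full-det} with \eqref{eq:w-distance-inverse}), and the inductive step's matching of forest terms against a cofactor expansion of $D[S]$ is never specified -- moreover deleting a leaf edge of $\conv(S,G)$ removes a vertex of $S$, so it does not implement your stated induction on $|V\setminus S|$. (Your $|S|=1$ check is fine: $\cof$ of the $1\times1$ zero matrix is $1$, and $\kappa(G;\{v\})=w(G)=1$.)
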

\begin{proof}
Bapat and Sivasubramanian (\cite[Theorem 11]{bapat-sivasubramanian})
show that
\[
	\cof \Da[S] = (-2)^{|S|-1} \left( \prod_{e \in E} \alpha_e \right) \det \La[\overline S] 
\]
where $\La$ is the Laplacian matrix.
Combine this identity with Theorem~\ref{thm:matrix-tree}.
\end{proof}

The following result is a direct consequence of theorems of Bapat, Kirkland, Neumann (\cite{bapat-kirkland-neumann}) and Bapat, Sivasubramanian (\cite{bapat-sivasubramanian}).

\begin{prop}\label{prop:full-det-cof-ratio}
	Suppose $\Da$ is the distance matrix of a tree with edge lengths $\{\alpha_e \colon e \in E\}$.
	Then
	\[
		\frac{\det \Da}{\cof \Da} = \frac1{2} \sum_{e \in E} \alpha_e .
	\]
\end{prop}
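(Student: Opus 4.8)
The plan is to deduce this from two facts already available: the identity $\cof M = (\det M)(\bone^\tr M^{-1}\bone)$ valid for any invertible symmetric $M$, and the relation $\Da\boldm = \big(\sum_{e\in E}\alpha_e\big)\bone$ from \eqref{eq:Dm}, where $\boldm$ is the vector with entries $\boldm_v = 2-\deg v$. This is essentially the full-vertex-set case of Theorem~\ref{thm:w-main}, so one expects only a short computation; there is no serious obstacle.

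First I would dispose of the degenerate case $|V|=1$ (where $\Da$ is the $1\times1$ zero matrix and $\sum_e\alpha_e=0$) and assume $|V|\geq 2$. Then Lemma~\ref{lem:distance-sub-nonsingular} (with $S=V$) gives $\det\Da\neq 0$, so $\Da$ is invertible and both sides of the claimed identity make sense. From \eqref{eq:Dm} we get $\Da^{-1}\bone = \big(\sum_{e\in E}\alpha_e\big)^{-1}\boldm$, hence
\[
	\bone^\tr \Da^{-1}\bone \;=\; \Big(\sum_{e\in E}\alpha_e\Big)^{-1}\bone^\tr\boldm \;=\; \frac{2}{\sum_{e\in E}\alpha_e},
\]
where the last equality is the handshaking identity \eqref{eq:handshake}, $\bone^\tr\boldm = \sum_{v\in V}(2-\deg v) = 2$. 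Therefore $\cof\Da = (\det\Da)\,\bone^\tr\Da^{-1}\bone = (\det\Da)\cdot \tfrac{2}{\sum_e\alpha_e}$, and rearranging yields the claim.

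Alternatively, and equally quickly, one can argue directly from the cited formulas: the weighted Graham--Pollak identity \eqref{eq:w-full-det} gives $\det\Da = (-1)^{n-1}2^{n-2}\big(\sum_e\alpha_e\big)\big(\prod_e\alpha_e\big)$, while Theorem~\ref{thm:distance-sub-cof} with $S=V$ gives $\cof\Da = (-2)^{n-1}\kappa(G;V)$; since the unique $V$-rooted spanning forest is the edgeless subgraph, $\kappa(G;V) = w(\emptyset) = \prod_e\alpha_e$, and dividing gives $\det\Da/\cof\Da = \tfrac12\sum_e\alpha_e$. Either route works; I would present the first, since it is self-contained given Proposition~\ref{prop:dist-laplacian} and foreshadows the potential-theoretic method used for general $S$.
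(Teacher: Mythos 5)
Your proposal is correct, but the route you say you would present is not the one the paper takes: the paper's proof is exactly your ``alternative'' --- it applies Theorem~\ref{thm:distance-sub-cof} with $S=V$, notes that $\trees(G;V)$ consists only of the edgeless forest so $\cof \Da = (-2)^{n-1}\prod_{e}\alpha_e$, and divides the weighted Graham--Pollak identity \eqref{eq:w-full-det} by this. Your primary argument, via $\Da\boldm = \big(\sum_e\alpha_e\big)\bone$ from \eqref{eq:Dm}, the handshake identity \eqref{eq:handshake} giving $\bone^\tr\boldm = 2$, and the identity $\cof \Da = (\det \Da)\,\bone^\tr \Da^{-1}\bone$, is also valid (the cofactor-sum identity holds for any invertible matrix, and invertibility for $n\ge 2$ is available from Lemma~\ref{lem:dist-signature}, or implicitly from \eqref{eq:w-distance-inverse} itself, so citing Lemma~\ref{lem:distance-sub-nonsingular} with $S=V$ is a slight detour rather than an error). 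The trade-off is as you anticipate: your route is self-contained given \eqref{eq:w-distance-inverse} and is precisely the $S=V$ instance of the scheme used in \S\ref{sec:distance_proofs} (find $\boldm$ with $\Da\boldm=\lambda\bone$, compute $\bone^\tr\boldm$, take the ratio), so it foreshadows the general method and does not invoke \eqref{eq:w-full-det}; the paper's route is shorter, purely a corollary of two already-quoted results, and avoids any discussion of matrix inverses, at the cost of depending on the Graham--Pollak-type determinant formula \eqref{eq:w-full-det} rather than reproving the ratio from first principles.
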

\begin{proof}
Consider Theorem~\ref{thm:distance-sub-cof} with $S = V$.
In this case $\trees(G; V)$ is a singleton consisting of the forest $T$ with no edges, so $w({T})$ is the product of all edge lengths.
Thus $\cof \Da = (-2)^{n - 1} \prod_{e \in E} \alpha_e$.
Combining this with \eqref{eq:w-full-det} yields the result.
\end{proof}

\subsection{Monotonicity}


Using Proposition~\ref{prop:optimization}, we can now prove Theorem~\ref{thm:monotonic}.

\begin{proof}[Proof of Theorem~\ref{thm:monotonic}]
By Corollary~\ref{cor:optimization}, both values 
$\displaystyle
	\frac{\det D[A]}{\cof D[A]}
$ 
and
$\displaystyle
	\frac{\det D[B]}{\cof D[B]}
$
arise from optimizing the same objective function on an affine subspace of $\RR^V$, 
and the subspace for $A$ is contained in the subspace for $B$.
\end{proof}

We get refined bounds by making use of the monotonicity property, Theorem~\ref{thm:monotonic}. 
Let $\conv(S,G)$ denote the convex hull of $S$, i.e. the subtree of $G$ consisting of all paths between points of $S$.
This is also known as the {\em Steiner tree} of the subset $S$.
 
\begin{thm}[Bounds on normalized principal minors]
\label{thm:det-cof-bounds}
	In the setting of Theorem~\ref{thm:monotonic}, we have the following.
	\begin{enumerate}[label=(\alph*)]
	\item
	For a nonempty subset $S \subseteq V$,
	\begin{equation*}
		\frac{\det \Da[S]}{\cof \Da[S]} \;\leq\; \frac12 \sum_{e \in E(\conv(S, G))} \alpha_e .
	\end{equation*}
	\item 
	For any vertices $s_0, s_1 \in S$,
	if $\gamma$ is the path between $s_0$ and $s_1$, then
	\begin{equation*}
		\frac12 \sum_{e \in \gamma} \alpha_e \;\leq\; \frac{\det \Da[S]}{\cof \Da[S]}.
	\end{equation*}
	\end{enumerate}
\end{thm}

\begin{proof}
\begin{enumerate}[label=(\alph*)]
\item Take $B$ to be the set of all vertices in $\conv(S, G)$.
Then $S \subseteq B$, so by Theorem~\ref{thm:monotonic}
and Proposition~\ref{prop:full-det-cof-ratio}, we have
\[
	\frac{\det D[S]}{\cof D[S]} \leq \frac{\det D[B]}{\cof D[B]} = \frac12 \sum_{e \in E(\conv(S, G))} \alpha_e .
\]
\item 
Let $A=\{s_0, s_1\}$.
Then $A \subseteq S$ by assumption. Now apply Theorem~\ref{thm:monotonic}
and Proposition~\ref{prop:full-det-cof-ratio}, we have
\[\frac{\det D[S]}{\cof D[S]}\geq
	\frac{\det D[A]}{\cof D[A]}
	= \frac12 d(s_0, s_1) 
	= \frac12 \sum_{e \in \gamma} \alpha_e .
	\qedhere
\]
\end{enumerate}
\end{proof}

\section{Determinant identities}
\label{sec:distance_proofs}

In this section, we prove our main result, Theorem~\ref{thm:w-main}, on the determinant of the principal submatrix $D[S]$.
Theorem~\ref{thm:main} is an immediate corollary.
The arguments do not make use of results in \S\ref{sec:optimization}.

\subsection{Outline of the proof}
\label{sec:proof_outline}

We compute $\det D[S]$ via the following steps.

\begin{enumerate}[label=(\Roman*)]
\item
Find the vector $\boldm \in \RR^S$ such that $D[S]\boldm = \lambda \bone$.
Find $\lambda$.

\item 
Compute the sum of entries of $\boldm$, i.e. $\bone^\tr \boldm$.

\item 
Since $D[S]$ is nonsingular (Lemma~\ref{lem:dist-signature}), $\boldm = \lambda (D[S]^{-1} \bone)$ and
\[
	\bone^\tr \boldm = \lambda (\bone^\tr D[S]^{-1} \bone) = \lambda \frac{\cof D[S]}{\det D[S]}.
\]
Calculate $\displaystyle \frac{\det D[S]}{\cof D[S]} = \frac{\lambda}{\bone^\tr \boldm}$.

\item
Multiply the previous expression by $\cof D[S]$, using the expression in Theorem~\ref{thm:distance-sub-cof}, to compute $\det D[S]$.
\end{enumerate}

It turns out that the entries of $\boldm$ are combinatorially meaningful (see \eqref{eq:m-vector}),
which also gives combinatorial meaning to
the constant $\lambda$.

\subsection{Equilibrium vector identities}

Fix a tree $G = (V,E)$ with edge lengths $\{\alpha_e \colon e \in E\}$ and a nonempty subset $S \subseteq V$.
We first define a vector $\boldm$ which satisfies the relation $D[S] \boldm = \lambda \bone$ for some $\lambda$.

\begin{dfn}
\label{dfn:m-vector}
Let $\boldm = \boldm(G;S)$ denote the vector in $\RR^S$ be defined by
\begin{equation}\label{eq:m-vector}
\boldm_v =  \sum_{T \in \trees(G;S)} w({T}) (2 - \outdeg(T,v))
\qquad\text{for each }v \in S.
\end{equation}
where $\outdeg(T,v)$ is the outdegree of the $v$-component of $T$ (see \S\ref{sec:outdegree}).
We call $\boldm$ the {\em equilibrium vector} of $(G; S)$.
\end{dfn}

The choice of terminology ``equilibrium'' here is in reference to potential theory, cf. \cite{Tsu, steinerberger}.

\begin{prop}
\label{prop:m-sum}
Suppose $S$ is nonempty.
For the vector $\boldm = \boldm(G; S)$ defined above, we have
\begin{enumerate}[label=(\alph*)]
\item
$\displaystyle \bone^\tr \boldm = 2 \,\sum_{T \in \trees(G;S)} w({T})$;

\item 
$\boldm$ is nonzero.
\end{enumerate}
\end{prop}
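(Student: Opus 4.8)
The plan is to prove (a) first, then derive (b) as a near-immediate consequence. For (a), I would compute $\bone^\tr \boldm = \sum_{v \in S} \boldm_v = \sum_{v \in S} \sum_{T \in \trees(G;S)} w(T)(2 - \outdeg(T,v))$ and swap the order of summation to get $\sum_{T \in \trees(G;S)} w(T) \sum_{v \in S} (2 - \outdeg(T,v))$. So the heart of the matter is the inner sum: for a fixed $S$-rooted spanning forest $T$, I claim $\sum_{v \in S}(2 - \outdeg(T,v)) = 2$. This is where Lemma~\ref{lem:outdeg-sum}(a) does the work: each component $T(v)$ of $T$ is a nonempty connected subgraph, so $\sum_{u \in V(T(v))}(2 - \deg_G u) = 2 - \outdeg(T,v)$ — wait, I need to be slightly careful here, since $\deg$ in Lemma~\ref{lem:outdeg-sum}(a) is the degree in $G$, but the lemma is exactly tailored to our situation where $H = T(v) \subseteq G$. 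Summing this over all $v \in S$, and using that the components $\{T(v) : v \in S\}$ partition $V(G)$ (since $T$ is a spanning forest with exactly one vertex of $S$ per component), gives $\sum_{v \in S}(2 - \outdeg(T,v)) = \sum_{v \in V(G)}(2 - \deg_G v) = 2$ by the handshake identity \eqref{eq:handshake}. Therefore $\bone^\tr \boldm = 2 \sum_{T \in \trees(G;S)} w(T)$, as claimed.

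For part (b): since all edge lengths $\alpha_e$ are positive, each weight $w(T) = \prod_{e \notin T}\alpha_e$ is strictly positive, and $\trees(G;S)$ is nonempty (for any nonempty $S$ there is at least one $S$-rooted spanning forest — e.g. take a spanning tree of $G$ and delete $|S|-1$ suitable edges). Hence $\bone^\tr \boldm = 2\sum_{T}w(T) > 0$, which forces $\boldm \neq 0$. In the language of the theorem statement this is immediate from (a): a vector whose coordinate sum is strictly positive cannot be the zero vector.

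I don't anticipate a serious obstacle here; the only place requiring care is making sure the bookkeeping in the double sum is right — specifically that the components $T(v)$ for $v$ ranging over $S$ genuinely form a partition of $V(G)$ (no vertex left out, no overlaps), which is precisely the defining property of an $S$-rooted spanning forest, and that Lemma~\ref{lem:outdeg-sum}(a) is being applied with degrees taken in the ambient tree $G$ (matching the definition of $\outdeg(T,v)$ in \S\ref{sec:outdegree}). Once those are pinned down, (a) is a one-line interchange-and-apply-the-lemma argument and (b) is a triviality given positivity of edge lengths.
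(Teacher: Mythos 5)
Your proposal is correct and follows essentially the same route as the paper's proof: interchange the order of summation, apply Lemma~\ref{lem:outdeg-sum}(a) to each component $T(s)$, use that the components of an $S$-rooted spanning forest partition $V(G)$, and conclude with the handshake identity \eqref{eq:handshake}; part (b) is then deduced exactly as in the paper from positivity of the weights and nonemptiness of $\trees(G;S)$. The points you flag as needing care (degrees taken in $G$, the partition property) are indeed the same ones the paper relies on, and your handling of them is correct.
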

\begin{proof}
(a)
By Lemma~\ref{lem:outdeg-sum} we can express $\outdeg(T, s)$ as a sum over vertices in $T(s)$,
\[
	\boldm_s = \sum_{T \in \trees(G;S)} w({T}) (2 - \outdeg (T,s))
= \sum_{T \in \trees(G;S)} w({T}) \left( \sum_{v \in T(s)}(2 - \deg(v))\right).
\]
Then exchange the order of summation in $\bone^\tr \boldm$,
\begin{align*}
	\bone^\tr \boldm = \sum_{s\in S} \boldm_s &= \sum_{s \in S} \left( \sum_{T \in \trees(G;S)} w({T}) \sum_{v \in T(s)}(2 - \deg(v)) \right) \\
	&= \sum_{T \in \trees(G;S)} w({T}) \left( \sum_{s\in S} \sum_{v \in T(s)} (2 - \deg(v)) \right) .
\end{align*}
Observe that the inner double sum is simply a sum over $v \in V$,
since the vertex sets $\{T(s) \text{ for } s \in S\}$ form a partition of $V$ by definition of $T$ being an $S$-rooted spanning forest.
Thus, using  equation \eqref{eq:handshake}, we obtain:
\[
	\bone^\tr \boldm = \sum_{T \in \trees(F;S)} w({T}) \left( \sum_{v \in V} (2 - \deg(v))\right)
	= \sum_{T \in \trees(F;S)} w({T}) \cdot 2 
\]

(b) 
All edge lengths are positive, so $w({T}) > 0$ for all $T$, and $\trees(G; S)$ is nonempty as long as $S$ is nonempty. 
Therefore, part (a) implies that $\bone^\tr \boldm > 0$.
\end{proof}

The following computation is the technical heart of our main result.

\begin{thm}
\label{thm:m-distance-product}
With $\boldm = \boldm(G;S)$ defined as in \eqref{eq:m-vector},
$D[S] \boldm = \lambda \bone$
for the constant
\begin{equation}\label{eq:lambda}
	\lambda = \sum_{e \in E(G)} \alpha_e \sum_{T\in \trees(G;S)} w({T}) - \sum_{F \in \forests_2(G;S)} w({F}) \, (2 - \outdeg(F,*))^2.
\end{equation}
\end{thm}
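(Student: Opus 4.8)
The plan is to compute the entries of the vector $D[S]\boldm$ directly, and verify that all of them equal the claimed constant $\lambda$. Fix $s \in S$; then
\[
	(D[S]\boldm)_s = \sum_{t \in S} d(s,t)\, \boldm_t = \sum_{t \in S} d(s,t) \sum_{T \in \trees(G;S)} w(T)\,(2 - \outdeg(T,t)).
\]
Using Lemma~\ref{lem:outdeg-sum}(a), I would rewrite $2 - \outdeg(T,t) = \sum_{v \in T(t)} (2 - \deg v)$, and using Proposition~\ref{prop:distance-sum}, $d(s,t) = \sum_{e \in E} \alpha_e\,\delta(e;s,t)$. Substituting both and exchanging orders of summation, the $s$-th entry becomes
\[
	(D[S]\boldm)_s = \sum_{T \in \trees(G;S)} w(T) \sum_{e \in E} \alpha_e \sum_{t \in S} \sum_{v \in T(t)} \delta(e;s,t)\,(2 - \deg v).
\]
Here the key point is that $\delta(e;s,t)$ depends only on which side of the split $G \setminus e$ the root $t$ lies on — it does not depend on $v$. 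So for fixed $T$ and fixed $e$, the inner sum is $\sum_{t \in S} \delta(e;s,t)\big(\sum_{v \in T(t)}(2-\deg v)\big) = \sum_{t \in S}\delta(e;s,t)\,(2 - \outdeg(T,t))$.

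**Reorganizing over the split.** For a fixed $T$ and fixed $e$, I would split the sum over roots $t \in S$ according to whether $e$ lies inside the component $T(t)$ or not. If $e \in T(t)$ for some (necessarily unique) $t$, write $T \setminus e = F \in \forests_2(G;S)$ with floating component $F(*)$; the split $G \setminus e$ separates $F(*)$ (together with some roots) from the rest. The combinatorial content is that $\sum_{t\in S}\delta(e;s,t)(2 - \outdeg(T,t))$ collapses, via the handshaking identity (Lemma~\ref{lem:outdeg-sum}(b) applied to pieces of the tree), to something expressible in terms of $\outdeg(F,*)$ and whether $s$ is on the same side as the floating component. If instead $e \notin T$, then $e \in \partial F(*) \cap \partial(\text{other pieces})$ for $F = T$, but more usefully $e$ joins two distinct components of $T$; the contribution here should telescope to give the ``main term'' $\sum_e \alpha_e \sum_T w(T)$. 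This is where I would invoke the transition structure of \S2.8 — the deletion map $(e,T) \mapsto T\setminus e$ sending $E(G) \times \trees(G;S)$ into $\trees(G;S) \sqcup \forests_2(G;S)$, with each $F \in \forests_2(G;S)$ hit exactly $\outdeg(F,*)$ times — to repackage the double sum $\sum_{T}\sum_{e}$ as a sum over $\forests_2(G;S)$ (for the $e \in T$ case) plus a leftover sum over $\trees(G;S)$ (for the $e \notin T$ case, giving the $\sum_e \alpha_e \sum_T w(T)$ piece).

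**Extracting the $(2-\outdeg(F,*))^2$ term.** The crux is that after the deletion map repackages things, each $F \in \forests_2(G;S)$ arises from $\outdeg(F,*)$ pairs $(e,T)$, and for each such pair the quantity $\sum_{t\in S}\delta(e;s,t)(2-\outdeg(T,t))$ contributes a term that, when summed over all $\outdeg(F,*)$ choices of $e \in \partial F(*)$ and also summed against the implicit $\alpha_e = w(F) - $ adjustment (noting $w(T) \cdot \text{(nothing)}$ vs. $w(F)$: since $T = F \cup e$, we have $w(T) = w(F)\cdot$ — wait, $w(T) = \prod_{e' \notin T}\alpha_{e'} = \alpha_e \prod_{e' \notin F}\alpha_{e'}/\alpha_e$; more precisely $w(F) = \alpha_e\, w(T)$ when $F = T \setminus e$), produces exactly $-w(F)\,(2 - \outdeg(F,*))^2$ after accounting for the factor $\alpha_e$. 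The bookkeeping with the weights — tracking that $\alpha_e \cdot w(T) = w(F)$ when $e \in T$ and $F = T \setminus e$ — is what converts the $\alpha_e$-weighted sum into the clean $w(F)$-weighted expression in \eqref{eq:lambda}, and the factor $(2 - \outdeg(F,*))^2$ (rather than a linear term) emerges because $s$ can itself lie on either side of the split, and summing over the $\outdeg(F,*)$ boundary edges of $F(*)$ together with the handshaking count produces a genuine square. Finally, I would observe that the resulting expression is manifestly independent of the choice of $s \in S$, which confirms $D[S]\boldm = \lambda\bone$.

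**Main obstacle.** The hard part will be the careful sign and index bookkeeping in the middle step: correctly identifying, for a fixed split edge $e$ inside a forest component $T(t_0)$, exactly which roots $t$ satisfy $\delta(e;s,t) = 1$, and then using Lemma~\ref{lem:outdeg-sum} on the two pieces of $T(t_0) \setminus e$ to show the alternating contributions assemble into a perfect square $(2 - \outdeg(F,*))^2$ rather than some other quadratic. It will help to handle separately the two cases according to whether $s$ lies in the floating component $F(*)$ or not, and to check the edge case $|S| = 1$ (where $\forests_2(G;\{q\})$ is the set of two-component spanning forests and the formula should reduce to a known identity) as a sanity check.
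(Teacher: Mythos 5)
Your plan follows the paper's proof in all essentials: expand $(D[S]\boldm)_s$ via Proposition~\ref{prop:distance-sum} and Lemma~\ref{lem:outdeg-sum}, reindex the pairs $(T,e)$ with $e\in T$ through the deletion map $F=T\setminus e$ together with the weight relation $w(F)=\alpha_e\,w(T)$, and obtain the square by counting how many boundary edges of $F(*)$ separate $s$ from the floating component. The only organizational difference is that you split cases $e\in T$ versus $e\notin T$, whereas the paper adds and subtracts the identity $\sum_{u\in V}(2-\deg u)\,\delta(e;v,u)=1$ for \emph{every} edge, so that the correction $\delta(e;v,\pi_T(u))-\delta(e;v,u)$ vanishes automatically when $e\notin T$.

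There is, however, one concrete bookkeeping error in your ledger, stated twice: the main term does \emph{not} come from the $e\notin T$ pairs alone. For fixed $T$ and $e\notin T$, every component of $T$ lies entirely on one side of the split $G\setminus e$, and Lemma~\ref{lem:outdeg-sum} gives $\sum_{t\in S}\delta(e;s,t)\,(2-\outdeg(T,t))=1$; these pairs therefore contribute only $\sum_T w(T)\sum_{e\notin T}\alpha_e$. For $e\in T$, with $F=T\setminus e$, the same handshaking computation gives $\sum_{t\in S}\delta(e;s,t)\,(2-\outdeg(T,t))=1\pm\bigl(2-\outdeg(F,*)\bigr)$, with sign $+$ if $s$ lies on the floating-component side of the split and $-$ otherwise. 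The extra $+1$ in each of these terms supplies exactly the missing piece $\sum_T w(T)\sum_{e\in T}\alpha_e=\sum_{F\in\forests_2(G;S)} w(F)\outdeg(F,*)$, and only the residual $\pm(2-\outdeg(F,*))$ parts form the square: exactly one of the $\outdeg(F,*)$ boundary edges separates $s$ from $F(*)$, so the signed count is $(\outdeg(F,*)-1)-1$, giving $w(F)(2-\outdeg(F,*))(\outdeg(F,*)-2)=-w(F)(2-\outdeg(F,*))^2$. If you instead book the entire main term to $e\notin T$ and only the square to $e\in T$, your total is short by $\sum_F w(F)\outdeg(F,*)$ (already visible for the path on three vertices with $S$ the two leaves, where your ledger gives $2$ while $\lambda=4$). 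With the $+1$'s from the $e\in T$ case accounted for, your argument closes and coincides with the paper's.
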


\begin{proof}
For $e\in E$ and $v,w\in V$, let $\delta(e;v,w)$ denote the function defined in \S\ref{sec:tree-splits}.
For any $v \in S$, we have
\begin{align}
  (D[S] \boldm)_v &= \sum_{s \in S} d(v,s) \boldm_s \nonumber \\
  &= \sum_{s \in S} \left( \sum_{e \in E(G)} \alpha_e\, \delta(e; v,s) \right) \left( \sum_{T \in \trees(G;S)} (2 - \outdeg(T,s)) w({T}) \right) \nonumber \\
  &= \sum_{T\in \trees(G;S)} w({T}) \sum_{e\in E} \alpha_e \left( \sum_{s \in S} \delta(e; v,s) (2 - \outdeg(T, s)) \right) \nonumber \\
  &= \sum_{T \in \trees(G;S)} w({T}) \sum_{e \in E} \alpha_e \left( \sum_{s \in S} \delta(e; v,s) \sum_{u \in T(s)} (2 - \deg(u)) \right). \label{eq:14-1}
\end{align}
The second line follows from Proposition~\ref{prop:distance-sum} and \eqref{eq:m-vector}. 
The last line follows from using Lemma~\ref{lem:outdeg-sum} for the subgraph $H = T(s)$.

We introduce additional notation to handle the double sum in parentheses in \eqref{eq:14-1}.
Each $S$-rooted spanning forest $T$ naturally induces a surjection $\pi_T\colon V \to S$, defined by 
\[
	\pi_T(u) = s \qquad\text{if and only if}\qquad u \in T(s), \text{ the $s$-component of $T$}.
\]
We similarly extend $\pi_T$ to edges of $T$, by
\[
	\pi_T(e) = s \qquad\text{if and only if} \qquad e \in T(s),
\]
where we use the same symbol $\pi_T : E(T) \to S$ by some abuse of notation.

Using this notation,
\begin{equation}\label{eq:9}
	(D[S] \boldm)_v = \sum_{T \in \trees(G;S)} w({T}) \sum_{e \in E} \alpha_e \left( \sum_{u \in V} (2 - \deg(u)) \delta(e; v,\pi_T(u)) \right)
\end{equation}
We will compare how the above expression changes after replacing $\delta(e; v,\pi_T(u))$ with $\delta(e; v, u)$.

From Lemma~\ref{lem:outdeg-sum} (b), 
we have
$\displaystyle
	\sum_{u \in V} (2 - \deg(u)) \delta(e; v,u)
	= 1. 
$
Thus
\begin{equation}\label{eq:10}
	\sum_{T \in \trees(G;S)} w({T}) \sum_{e \in E} \alpha_e
	= \sum_{T \in \trees(G;S)} w({T}) \sum_{e \in E} \alpha_e \left( \sum_{u \in V} (2 - \deg(u)) \delta(e; v,u) \right) 
\end{equation}
By subtracting equation \eqref{eq:10} from \eqref{eq:9}, we obtain
\begin{multline}\label{eq:subtract-delta-diff}
	(D[S] \boldm)_v - \sum_{T \in \trees(G;S)} w({T}) \sum_{e \in E} \alpha_e \\
	= \sum_{T \in \trees(G;S)} w({T}) \sum_{e \in E} \alpha_e \sum_{u \in V} (2 - \deg(u)) \Big(\delta(e; v, \pi_T(u)) - \delta(e; v, u) \Big).
\end{multline}
For the rest of the argument, let
$\displaystyle
	(\star) = (D[S] \boldm)_v - \sum_{T \in \trees(G;S)} w({T}) \sum_{e \in E} \alpha_e.
$

We focus on the inner expression $\delta(e; v, \pi_T(u)) - \delta(e; v, u)$.
Recall that
\begin{equation}\label{eq:delta-diff}
	\delta(e; v, \pi_T(u)) - \delta(e; v, u) 
	= \one(\pi_T(u) \in (G \setminus e)^{\overline v}) - \one( u \in (G \setminus e)^{\overline v}) ,
\end{equation}
cf. Remark~\ref{rmk:delta-properties} (ii).
Now, consider varying $u$ over all vertices, when $e$, $T$, and $v$ are fixed.
There are three cases:

\begin{itemize}[leftmargin=*]
\item Case 1: if $e \not \in T$, then $u$ and $\pi_T(u)$ are on the same side of the tree split $G \setminus e$, for every vertex $u$.
In this case $\delta(e;v, \pi_T(\cdot)) - \delta(e; v, \cdot) = 0$.

\item Case 2: if $e \in T$ and $e$ separates $\pi_T(e)$ from $v$, then $\delta(e;v, \pi_T(\cdot)) - \delta(e; v, \cdot)$ is the indicator function for the floating component of $T \setminus e$. See Figure~\ref{fig:e-delete-from-forest}, left.

\item Case 3: if $e \in T$ and $e$ does not separate $\pi_T(e)$ from $v$, then $\delta(e;v, \pi_T(\cdot)) - \delta(e; v, \cdot)$ is the negative of the indicator function for the floating component of $T \setminus e$. See Figure~\ref{fig:e-delete-from-forest}, right.
\end{itemize}
\begin{figure}[h]
\begin{minipage}{0.45\textwidth}
\centering
\begin{tikzpicture}[scale=0.6]
	\coordinate (1) at (0,0);
	\coordinate (A) at (1,0);
	\coordinate (B) at (2,0);
	\coordinate (C) at (3,0);
	\coordinate (D) at (4,0);
	\coordinate (2) at (5,0);
	\coordinate (3) at (2,1);
	
	\foreach \c in {1,2,3,A,B,C,D} {
		\fill (\c) circle (2pt);
	}
	\foreach \c in {1,2,3} {
		\draw (\c) circle (3pt);
	}

	\draw (1) -- (A);
	\draw (A) -- node[below] {$e$} (B);
	\draw (B) -- (C);
	\draw[dotted] (C) -- (D);
	\draw (D) -- (2);
	\draw[dotted] (B) -- (3);

	\node[above=0.1] at (2) {$v$};
	\node[left] at (1) {$\pi_T(e)$};

	\draw[color=red, line width=12pt, cap=round, opacity=0.1] (B) -- (C);
\end{tikzpicture}
\end{minipage}
\begin{minipage}{0.45\textwidth}
\centering
\begin{tikzpicture}[scale=0.6]
	\coordinate (1) at (0,0);
	\coordinate (A) at (1,0);
	\coordinate (B) at (2,0);
	\coordinate (C) at (3,0);
	\coordinate (D) at (4,0);
	\coordinate (2) at (5,0);
	\coordinate (3) at (2,1);
	
	\foreach \c in {1,2,3,A,B,C,D} {
		\fill (\c) circle (2pt);
	}
	\foreach \c in {1,2,3} {
		\draw (\c) circle (3pt);
	}

	\draw (1) -- (A);
	\draw (A) -- node[below] {$e$} (B);
	\draw (B) -- (C);
	\draw[dotted] (C) -- (D);
	\draw (D) -- (2);
	\draw[dotted] (B) -- (3);

	\node[above left] at (1) {$\pi_T(e)$};
	\node[below left] at (1) {$v$};

	\draw[color=red, line width=12pt, cap=round, opacity=0.1] (B) -- (C);
\end{tikzpicture}
\end{minipage}
\caption{Edge $e \in T$ with $\delta(e; v, \pi_T(e)) = 1$ (left) and $\delta(e; v, \pi_T(e)) = 0$ (right). The floating component of $T \setminus e$ is highlighted.}
\label{fig:e-delete-from-forest}
\end{figure}
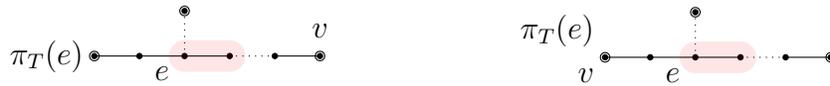

Thus, upon multiplying the term \eqref{eq:delta-diff} by $(2 - \deg(u))$ and summing over all vertices $u$, and applying Lemma~\ref{lem:outdeg-sum}, we obtain
\begin{align}
	\sum_{u \in V} (2 - \deg(u)) &\left(\delta(e; v, \pi_T(u)) - \delta(e; v, u)\right) \nonumber \\
	&= \begin{cases}
	0 &\text{if } e \not \in T, \\
	2 - \outdeg(T \setminus e, *) &\text{if } e \in T \text{ and } \delta(e; v, \pi_T(e)) = 1, \\
	-(2 - \outdeg(T \setminus e, *)) &\text{if } e \in T \text{ and } \delta(e; v, \pi_T(e)) = 0 .
	\end{cases} \label{delta-diff-sum}
\end{align}
Substituting \eqref{delta-diff-sum} into \eqref{eq:subtract-delta-diff} yields
\begin{multline}\label{eq:45}
	(\star)
	= \sum_{T\in \trees(G;S)} \! w({T}) \sum_{e \in T} \alpha_e \times \\
	( 2 - \outdeg(T\setminus e,*)) \Big( \one(\delta(e; v, \pi_T(e)) = 1) \! - \one(\delta(e; v, \pi_T(e)) = 0) \Big),
\end{multline}
where we use the shorthand
$\displaystyle
	(\star) = (D[S] \boldm)_v - \sum_{T \in \trees(G;S)} w({T}) \sum_{e \in E} \alpha_e.
$

We now rewrite the above expression in terms of $\forests_2(G;S)$.
Observe in \eqref{eq:45} that the deletion $T \setminus e$ is an $(S,*)$-rooted spanning forest of $G$, and that the corresponding weights satisfy
\[
	w({F}) = \alpha_e \cdot w({T}) \qquad\text{if}\qquad F = T \setminus e.
\]
Note that $F = T \setminus e$ is equivalent to $T = F \cup e$, and in particular this only occurs when we choose the edge $e$ to be in the floating boundary $\partial F(*)$.

Thus
\begin{align*}
	(\star) 
	&= \sum_{F \in \forests_2} w({F}) (2 - \outdeg(F, *)) \times \\
	& \qquad\qquad\qquad   \sum_{e \in \partial F(*)} \Big( \one(\delta(e; v, \pi_{(F \cup e)}(e)) = 1) - \one(\delta(e; v, \pi_{(F \cup e)}(e)) = 0) \Big) .
\end{align*}
Now for any $e \not\in F$, let $\delta(e; v, F(*)) = \delta(e; v, x)$ for any $x \in F(*)$, i.e.
\[
	\delta(e; v, F(*)) = \begin{cases}
	1 &\text{if $e$ lies on a path from $v$ to $F(*)$}, \\
	0 &\text{otherwise}.
	\end{cases}
\]
The condition that $\delta(e; v, \pi_{(F \cup e)}(e)) = 1$ (respectively, $\delta(e; v, \pi_{(F \cup e)}(e)) = 0$)
is equivalent to ${\delta(e; v, F(*)) = 0}$ (respectively, ${\delta(e; v, F(*)) = 1}$).
For an illustration, compare Figures~\ref{fig:delta-floating-1} and \ref{fig:delta-floating-2}.
Therefore
\begin{multline}\label{eq:penult}
	(\star) 
	= \sum_{F \in \forests_2} w({F}) (2 - \outdeg(F, *)) \times  \\
	\sum_{e \in \partial F(*)} \Big( \one(\delta(e; v, F(*)) = 0) - \one(\delta(e; v, F(*)) = 1) \Big) .
\end{multline}

Finally, we observe that for any forest $F$ in $\forests_2(G;S)$,
there is exactly one edge $e$ in the boundary $\partial F(*)$ of the floating component which satisfies $\delta(e; v, F(*)) = 1$, namely the unique boundary edge on the path from the floating component $F(*)$ to $v$.
Hence
\begin{align*}
	&\#\{e \in \partial F(*) \colon \delta(e;v, F(*)) = 1 \} = 1,
\qquad\text{and}\qquad \\
	&\#\{e \in \partial F(*) \colon \delta(e;v, F(*)) = 0 \} = \outdeg(F,*) - 1 .
\end{align*}
Thus, the previous expression \eqref{eq:penult} simplifies as
\begin{align*}
	(\star) 
	&= - \!\sum_{F \in \forests_2} w({F}) (2 - \outdeg(F,*))^2 
\end{align*}
as desired.
\end{proof}

\begin{figure}[h]
	\begin{minipage}{0.45\textwidth}
		\centering
\begin{tikzpicture}[scale=0.6]
	\coordinate (1) at (0,0);
	\coordinate (A) at (1,0);
	\coordinate (B) at (2,0);
	\coordinate (C) at (3,0);
	\coordinate (D) at (4,0);
	\coordinate (2) at (5,0);
	\coordinate (3) at (2,1);
	
	\foreach \c in {1,2,3,A,B,C,D} {
		\fill (\c) circle (2pt);
	}
	\foreach \c in {1,2,3} {
		\draw (\c) circle (3pt);
	}

	\draw (1) -- (A);
	\draw[dotted] (A) -- node[below] {$e$} (B);
	\draw (B) -- (C);
	\draw[dotted] (C) -- (D);
	\draw (D) -- (2);
	\draw[dotted] (B) -- (3);

	\node[above=0.1] at (2) {$v$};
	\node[below=0.2] at (C) {$F(*)$};

	\draw[color=red, line width=12pt, cap=round, opacity=0.1] (B) -- (C);
\end{tikzpicture}
\end{minipage}
\begin{minipage}{0.45\textwidth}
	\centering
	\begin{tikzpicture}[scale=0.6]
		\coordinate (1) at (0,0);
		\coordinate (A) at (1,0);
		\coordinate (B) at (2,0);
		\coordinate (C) at (3,0);
		\coordinate (D) at (4,0);
		\coordinate (2) at (5,0);
		\coordinate (3) at (2,1);
		
		\foreach \c in {1,2,3,A,B,C,D} {
			\fill (\c) circle (2pt);
		}
		\foreach \c in {1,2,3} {
			\draw (\c) circle (3pt);
		}

		\draw (1) -- (A);
		\draw (A) -- node[below] {$e$} (B);
		\draw (B) -- (C);
		\draw[dotted] (C) -- (D);
		\draw (D) -- (2);
		\draw[dotted] (B) -- (3);

		\node[above=0.1] at (2) {$v$};
		\node[left] at (1) {$\pi_T(e)$};
	\end{tikzpicture}
\end{minipage}
\caption{A forest $F \in \forests_2(G; S)$ and edge $e \in \partial F(*)$,
 with $\delta(e; v, F(*)) = 0$ (left). 
 After adding $e$ to $F$ to obtain $T = F \cup e \in \forests_1(G; S)$,
 we have $\delta(e; v, \pi_T(e)) = 1$ (right).}
\label{fig:delta-floating-1}
\end{figure}

\begin{figure}[h]
	\begin{minipage}{0.45\textwidth}
		\centering
		\begin{tikzpicture}[scale=0.6]
			\coordinate (1) at (0,0);
			\coordinate (A) at (1,0);
			\coordinate (B) at (2,0);
			\coordinate (C) at (3,0);
			\coordinate (D) at (4,0);
			\coordinate (2) at (5,0);
			\coordinate (3) at (2,1);
			
			\foreach \c in {1,2,3,A,B,C,D} {
				\fill (\c) circle (2pt);
			}
			\foreach \c in {1,2,3} {
				\draw (\c) circle (3pt);
			}
		
			\draw (1) -- (A);
			\draw[dotted] (A) -- (B);
			\draw (B) -- (C);
			\draw[dotted] (C) -- node[below] {$e$} (D);
			\draw (D) -- (2);
			\draw[dotted] (B) -- (3);
		
			\node[above=0.1] at (2) {$v$};
			\node[below=0.2] at (B) {$F(*)$};
			
			\draw[color=red, line width=12pt, cap=round, opacity=0.1] (B) -- (C);
		\end{tikzpicture}
	\end{minipage}
	\begin{minipage}{0.45\textwidth}
		\centering
		\begin{tikzpicture}[scale=0.6]
			\coordinate (1) at (0,0);
			\coordinate (A) at (1,0);
			\coordinate (B) at (2,0);
			\coordinate (C) at (3,0);
			\coordinate (D) at (4,0);
			\coordinate (2) at (5,0);
			\coordinate (3) at (2,1);
			
			\foreach \c in {1,2,3,A,B,C,D} {
				\fill (\c) circle (2pt);
			}
			\foreach \c in {1,2,3} {
				\draw (\c) circle (3pt);
			}

			\draw (1) -- (A);
			\draw[dotted] (A) -- (B);
			\draw (B) -- (C);
			\draw (C) -- node[below] {$e$} (D);
			\draw (D) -- (2);
			\draw[dotted] (B) -- (3);

			\node[above=0.1] at (2) {$v$};
			\node[below right] at (2) {$\pi_T(e)$};
		\end{tikzpicture}
	\end{minipage}
\caption{A forest $F \in \forests_2(G; S)$ and edge $e \in \partial F(*)$, with $\delta(e; v, F(*)) = 1$ (left). 
After adding $e$ to $F$ to obtain $T = F \cup e \in \forests_1(G; S)$, we have $\delta(e; v, \pi_T(e)) = 0$ (right).}
\label{fig:delta-floating-2}
\end{figure}
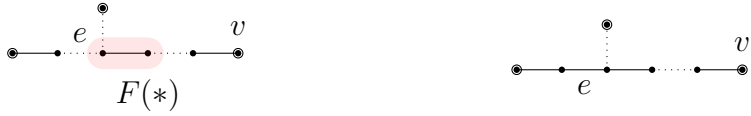

\subsection{Principal minors of $D$}

Finally we can prove our main theorems, giving combinatorial expressions for $\det D[S]$.
The proof uses the argument outlined in \S\ref{sec:proof_outline}.
\begin{proof}[Proof of Theorem~\ref{thm:w-main}]
First, suppose $|S| = 1$.
Then $D[S]$ is the zero matrix, and we must show that the right-hand side is zero.
Since $G$ is a tree, $\trees(G; \{v\})$ consists of the tree $G$ itself, with weight $w({G}) = 1$.
Moreover, the forests in $\forests_2(G; \{v\})$ are precisely $G \setminus e$ for every $e \in E$, and for $F = G \setminus e$ we have 
$w({F}) = \alpha_e$ and
$(\outdeg(F, *) - 2)^2 = 1$.
This shows that the right-hand side of \eqref{eq:w-main} is zero.

Now suppose $|S| \geq 2$.
Lemma~\ref{lem:distance-sub-nonsingular} shows $D[S]$ is nonsingular, so we may use the inverse matrix identity
\begin{equation}\label{eq:cof-invsere-sum}
	\bone^\tr D[S]^{-1} \bone =
	\frac{\cof D[S]}{\det D[S]}. 
\end{equation}
Let $\boldm = \boldm(G; S)$ denote the equilibrium vector \eqref{eq:m-vector}.
By 
Proposition~\ref{prop:m-sum} (a) and Theorem~\ref{thm:distance-sub-cof},
\[
	\bone^\tr \boldm 
	= 2 \sum_{T \in \trees(G;S)} w({T})
	= \frac{\cof D[S]}{(-1)^{1 - |S|} 2^{2 - |S|} }.
\]
Theorem~\ref{thm:m-distance-product} states that $D[S] \boldm = \lambda \bone$
for some constant $\lambda$,
which is nonzero since $D[S]$ is invertible (see Lemma~\ref{lem:distance-sub-nonsingular}) and $\boldm$ is nonzero (see Proposition~\ref{prop:m-sum} (b)).
Hence
\begin{equation}\label{eq:inverse-sum-entries}
	\bone^\tr D[S]^{-1} \bone
	= \lambda^{-1} \bone^\tr \boldm
	= \frac{\cof D[S]}{(-1)^{|S| - 1} 2^{|S| - 1} \lambda} .
\end{equation}
Comparing \eqref{eq:cof-invsere-sum} with \eqref{eq:inverse-sum-entries} gives the desired result,
$\det D[S] = (-1)^{|S| - 1} 2^{|S| - 1} \lambda$.
\end{proof}

\begin{proof}[Proof of Theorem~\ref{thm:main}]
Set all lengths $\alpha_e$ to $1$ in Theorem~\ref{thm:w-main}.
In this case, the weights $w({T}) = 1$ and $w({F}) = 1$ for all forests $T$ and $F$,
and 
\[
	\sum_{e \in E} \alpha_e = n - 1, 
	\qquad \sum_{T \in \trees(G;S)} w({T}) = \kappa(G; S).
	\qedhere
\]
\end{proof}

\section{Characteristic polynomial coefficients}\label{sec:char-poly}

In this section we prove Theorem~\ref{thm:dist-poly-coeff}, on the coefficients of the characteristic polynomial of the distance matrix.
For simplicity, we have stated the result for trees with unit edge lengths. 
It will be clear from the proof below that a similar result holds for trees with arbitrary edge lengths. 

We make use of the following notation.
For a positive integer $k$, we let $\forests_k(G)$ denote the set of spanning forests of $G$ that contain exactly $k$ connected components.
If a forest $F$ has connected components $F = C_1 \sqcup \cdots \sqcup C_k$, then we let
\[
	\pi(F) := \prod_{i = 1}^k |V(C_i)|.
\]
Using this notation, recall that Theorem~\ref{thm:dist-poly-coeff} states that the $\lambda^{n - k}$-coefficient of the characteristic polynomial of $D$ is
\begin{multline}\label{eq:dist-poly-coeff}
	\delta_{n - k}
	= (-1)^{n - 1} 2^{k - 2} \bigg((n - 1) \sum_{\substack{F \in \mathcal F_{k}(G) }} \pi(F) 
	\\ - \sum_{\substack{F \in \mathcal F_{k + 1}(G) \\ F = C_1 \sqcup \cdots \sqcup C_{k + 1}}} \!\!\! \pi(F) \sum_{j = 1}^{k + 1}  \frac{(\outdeg(C_j) - 2)^2}{|V(C_j)|}\bigg) .
\end{multline}
\begin{proof}[Proof of Theorem~\ref{thm:dist-poly-coeff}]
Let $\det(D - \lambda I) = \sum_{k = 0}^n \delta_{n - k} \lambda^{n - k}$.
It is a standard result in matrix algebra that
$\displaystyle \delta_{n - k} = (-1)^{n - k} \sum_{|S| = k} \det D[S]$.
Using Theorem~\ref{thm:main},
\begin{align}\label{eq:minor-sum}
	\delta_{n - k} 
	&= (-1)^{n - 1} 2^{k - 2} \left( (n - 1)\sum_{|S| = k} \kappa(G; S) - \sum_{|S| = k} \sum_{\forests_2(G; S)} (\outdeg(F, *) - 2)^2 \right) .
\end{align}
Then, observe that
\begin{align}
	\sum_{|S| = k} \kappa(G; S) &= \sum_{\substack{S \subset V \\ |S| = k}} |\{F \in \forests_k(G) : \text{$F$ is rooted by $S$}\}| \label{eq:c-pf-term-1}\\
	&= \sum_{F \in \forests_k(G)} |\{S \subset V : \text{$F$ is rooted by $S$}\}|
	= \sum_{F \in \forests_k(G)} \pi(F), \nonumber
\end{align}
since $\pi(F) := \prod_{i = 1}^k |V(C_i)|$ is the number of ways of rooting the forest $F$ with components $C_1 \sqcup \cdots \sqcup C_k$.
Next, the summation of $ (\outdeg(F, *) - 2)^2$-terms over $\forests_2(G; S)$ can be similarly rearranged to
\begin{equation}\label{eq:c-pf-term-2}
	\sum_{|S| = k} \sum_{\forests_2(G; S)} (\outdeg(F, *) - 2)^2 = 
	\sum_{\substack{F \in \forests_{k + 1}(G) \\ F = C_1 \sqcup \cdots \sqcup C_{k + 1}}} \sum_{j = 1}^{k + 1} \frac{\pi(F)}{|V(C_j)|} \cdot  (\outdeg(C_j) - 2)^2
\end{equation}
since each forest in $\forests_2(G; S)$ has $k + 1$ components and $\pi(F) / {|V(C_j)|} $ is the number of ways to mark $F$ as an $(S, *)$-rooted forest in which $C_j$ is the floating component.
The desired result \eqref{eq:dist-poly-coeff} now follows from \eqref{eq:minor-sum}, \eqref{eq:c-pf-term-1}, and \eqref{eq:c-pf-term-2}.
\end{proof}

\begin{rmk}
In~\cite{graham-szemeredi}, it is shown that, when expressing $\delta_i$ as a linear combination of subforest counts, the resulting subforest coefficients are unique.
At first glance, this seems to contradict the existence of both \cite{graham-lovasz} and Theorem~\ref{thm:dist-poly-coeff} as distinct expressions for $\delta_i$ in terms of subforest counts.

This apparent contradiction is resolved by noting that, in \eqref{eq:dist-poly-coeff}, the term $\outdeg(C_j)$ depends not only on the forest $F$, but on the embedding in the ambient tree $F \subset G$.
The forests in \eqref{eq:dist-poly-coeff} also may have isolated vertices, which is disallowed by \cite{graham-szemeredi}.

\end{rmk}

\section{Examples}
\label{sec:examples}

\begin{eg}
Suppose $G$ is the tree with unit edge lengths shown in Figure~\ref{fig:tree-5}, with five leaf vertices and three internal vertices.
Let $S=\{1,2,3,4,5\}$ be the set of leaf vertices. 
The corresponding distance submatrix is
\[
	D[S] = \begin{pmatrix}
	0 & 2 & 3 & 4 & 4 \\
	2 & 0 & 3 & 4 & 4 \\
	3 & 3 & 0 & 3 & 3 \\
	4 & 4 & 3 & 0 & 2 \\
	4 & 4 & 3 & 2 & 0
	\end{pmatrix},
\]
whose determinant is $864$.

\begin{figure}[h]
\centering
\begin{tikzpicture}[scale=0.6]
	\coordinate (1) at (-2,0.7);
	\coordinate (2) at (-2,-0.6);
	\coordinate (3) at (0,-1);
	\coordinate (4) at (2,-0.7);
	\coordinate (5) at (2,0.7);
	\coordinate (A) at (-1,0);
	\coordinate (B) at (0,0);
	\coordinate (C) at (1,0);

	\node[left=2pt] at (1) {$1$};
	\node[left=2pt] at (2) {$2$};
	\node[left=2pt] at (3) {$3$};
	\node[right=2pt] at (4) {$4$};
	\node[right=2pt] at (5) {$5$};

	\draw (B) -- (A) -- (1);
	\draw (A) -- (2);
	\draw (3) -- (B) -- (C) -- (4);
	\draw (C) -- (5);
	
	\foreach \c in {1,2,3,4,5,A,B,C} {
		\fill[black] (\c) circle (3pt);
	}
\end{tikzpicture}
\caption{Tree with five leaves.}
\label{fig:tree-5}
\end{figure}

The tree $G$ has $7$ edges and $21$ $S$-rooted spanning forests.
There are $19$ $(S, *)$-rooted spanning forests. 
Of the floating components in these forests, $14$ have outdegree three, $4$ have outdegree four, and $1$ has outdegree five.
By Theorem~\ref{thm:main},
\[
	\det D[S] 
	= (-1)^4\, 2^3 \left( 7 \cdot 21 - (14 \cdot 1^2 + 4 \cdot 2^2 + 1 \cdot 3^2) \right) = 864.
\]
\end{eg}

\begin{eg}
Suppose $G$ is a tree consisting of three edges joined at a central vertex.
\[
\begin{tikzpicture}
	\coordinate (u) at (-1,0);
	\coordinate (v) at (0.7,0.7);
	\coordinate (w) at (0.7,-0.7);
	\coordinate (o) at (0,0);

	\draw (o) -- node[above] {$a$} (u);
	\draw (v) -- node[above left] {$b$} (o)
		-- node[below left] {$c$} (w);

	\foreach \c in {u, v, w, o} {
		\fill (\c) circle (2pt);
	}
	\foreach \c in {u, v, w} {
		\draw (\c) circle (4pt);
	}
	
	\node[left=3pt] at (u) {$u$};
	\node[right=3pt] at (v) {$v$};
	\node[right=3pt] at (w) {$w$};
\end{tikzpicture}
\]
Suppose $S$ consists of the leaf vertices $ \{u,v,w\}$.
Then 
\[
	D[S] = \begin{pmatrix}
	0 & a + b & a + c \\
	a + b & 0 & b + c \\
	a + c & b + c & 0
	\end{pmatrix}
\]
which has determinant
\[\det D[S] = 2(a+b)(a+c)(b+c) 
	= 2\left( (a+b+c)(ab + ac + bc) - abc \right).
\]
The equilibrium vector (which satisfies $D[S] \boldm = \lambda \bone$) in this example is 
\[
	\boldm = \begin{pmatrix} 
	ab + ac & 
	ab + bc &
	ac + bc 
	\end{pmatrix}^\tr.
\]
\end{eg}

\begin{eg}
Suppose $G$ is the tree with unit edge lengths shown below, with five leaf vertices.
\[
\begin{tikzpicture}[scale=1.0]
	\coordinate (1) at (-1,0.37);
	\coordinate (2) at (-1,-0.37);
	\coordinate (3) at (0.75,-0.5);
	\coordinate (4) at (1,0);
	\coordinate (5) at (0.75,0.5);
	\coordinate (A) at (-0.5,0);
	\coordinate (B) at (0.25,0);
	
	\foreach \c in {1,2,3,4,5,A,B} {
		\fill (\c) circle (2pt);
	}
	\foreach \c in {1,2,3,4,5} {
		\draw (\c) circle (4pt);
	}

	\draw (A) -- (1);
	\draw (A) -- (2);
	\draw (B) -- (3);
	\draw (B) -- (4);
	\draw (B) -- (5);
	\draw (A) -- (B);
	
	\foreach \c/\d in {1/left,2/left,3/right,4/right,5/right} {
		\node[\d=0.2] at (\c) {$\c$};
	}
\end{tikzpicture}
\]
Let $S$ denote the set of five leaf vertices. Then
\[
	D[S] = \begin{pmatrix}
	0 & 2 & 3 & 3 & 3 \\
	2 & 0 & 3 & 3 & 3 \\
	3 & 3 & 0 & 2 & 2 \\
	3 & 3 & 2 & 0 & 2 \\
	3 & 3 & 2 & 2 & 0
	\end{pmatrix}.
\]
There are $11$ forests in $\trees(G;S)$:
\[
\begin{tikzpicture}[scale=0.4]
	\coordinate (1) at (-2,0.7);
	\coordinate (2) at (-2,-0.7);
	\coordinate (3) at (1.5,-1);
	\coordinate (4) at (2,0);
	\coordinate (5) at (1.5,1);
	\coordinate (A) at (-1,0);
	\coordinate (B) at (0.5,0);
	
	\foreach \c in {1,2,3,4,5,A,B} {
		\fill (\c) circle (2pt);
	}
	\foreach \c in {1,2,3,4,5} {
		\draw (\c) circle (4pt);
	}

	\draw (A) -- (1);
	\draw[dotted] (A) -- (2);
	\draw (B) -- (3);
	\draw[dotted] (B) -- (4);
	\draw[dotted] (B) -- (5);
	\draw[dotted] (A) -- (B);
\end{tikzpicture}
\qquad
\begin{tikzpicture}[scale=0.4]
	\coordinate (1) at (-2,0.7);
	\coordinate (2) at (-2,-0.7);
	\coordinate (3) at (1.5,-1);
	\coordinate (4) at (2,0);
	\coordinate (5) at (1.5,1);
	\coordinate (A) at (-1,0);
	\coordinate (B) at (0.5,0);
	
	\foreach \c in {1,2,3,4,5,A,B} {
		\fill (\c) circle (2pt);
	}
	\foreach \c in {1,2,3,4,5} {
		\draw (\c) circle (4pt);
	}

	\draw (A) -- (1);
	\draw[dotted] (A) -- (2);
	\draw[dotted] (B) -- (3);
	\draw (B) -- (4);
	\draw[dotted] (B) -- (5);
	\draw[dotted] (A) -- (B);
\end{tikzpicture}
\qquad
\begin{tikzpicture}[scale=0.4]
	\coordinate (1) at (-2,0.7);
	\coordinate (2) at (-2,-0.7);
	\coordinate (3) at (1.5,-1);
	\coordinate (4) at (2,0);
	\coordinate (5) at (1.5,1);
	\coordinate (A) at (-1,0);
	\coordinate (B) at (0.5,0);
	
	\foreach \c in {1,2,3,4,5,A,B} {
		\fill (\c) circle (2pt);
	}
	\foreach \c in {1,2,3,4,5} {
		\draw (\c) circle (4pt);
	}

	\draw (A) -- (1);
	\draw[dotted] (A) -- (2);
	\draw[dotted] (B) -- (3);
	\draw[dotted] (B) -- (4);
	\draw (B) -- (5);
	\draw[dotted] (A) -- (B);
\end{tikzpicture}
\qquad
\begin{tikzpicture}[scale=0.4]
	\coordinate (1) at (-2,0.7);
	\coordinate (2) at (-2,-0.7);
	\coordinate (3) at (1.5,-1);
	\coordinate (4) at (2,0);
	\coordinate (5) at (1.5,1);
	\coordinate (A) at (-1,0);
	\coordinate (B) at (0.5,0);
	
	\foreach \c in {1,2,3,4,5,A,B} {
		\fill (\c) circle (2pt);
	}
	\foreach \c in {1,2,3,4,5} {
		\draw (\c) circle (4pt);
	}

	\draw[dotted] (A) -- (1);
	\draw (A) -- (2);
	\draw (B) -- (3);
	\draw[dotted] (B) -- (4);
	\draw[dotted] (B) -- (5);
	\draw[dotted] (A) -- (B);
\end{tikzpicture}
\qquad
\begin{tikzpicture}[scale=0.4]
	\coordinate (1) at (-2,0.7);
	\coordinate (2) at (-2,-0.7);
	\coordinate (3) at (1.5,-1);
	\coordinate (4) at (2,0);
	\coordinate (5) at (1.5,1);
	\coordinate (A) at (-1,0);
	\coordinate (B) at (0.5,0);
	
	\foreach \c in {1,2,3,4,5,A,B} {
		\fill (\c) circle (2pt);
	}
	\foreach \c in {1,2,3,4,5} {
		\draw (\c) circle (4pt);
	}

	\draw[dotted] (A) -- (1);
	\draw (A) -- (2);
	\draw[dotted] (B) -- (3);
	\draw (B) -- (4);
	\draw[dotted] (B) -- (5);
	\draw[dotted] (A) -- (B);
\end{tikzpicture}
\qquad
\begin{tikzpicture}[scale=0.4]
	\coordinate (1) at (-2,0.7);
	\coordinate (2) at (-2,-0.7);
	\coordinate (3) at (1.5,-1);
	\coordinate (4) at (2,0);
	\coordinate (5) at (1.5,1);
	\coordinate (A) at (-1,0);
	\coordinate (B) at (0.5,0);
	
	\foreach \c in {1,2,3,4,5,A,B} {
		\fill (\c) circle (2pt);
	}
	\foreach \c in {1,2,3,4,5} {
		\draw (\c) circle (4pt);
	}

	\draw[dotted] (A) -- (1);
	\draw (A) -- (2);
	\draw[dotted] (B) -- (3);
	\draw[dotted] (B) -- (4);
	\draw (B) -- (5);
	\draw[dotted] (A) -- (B);
\end{tikzpicture}
\]
\[
\begin{tikzpicture}[scale=0.4]
	\coordinate (1) at (-2,0.7);
	\coordinate (2) at (-2,-0.7);
	\coordinate (3) at (1.5,-1);
	\coordinate (4) at (2,0);
	\coordinate (5) at (1.5,1);
	\coordinate (A) at (-1,0);
	\coordinate (B) at (0.5,0);
	
	\foreach \c in {1,2,3,4,5,A,B} {
		\fill (\c) circle (2pt);
	}
	\foreach \c in {1,2,3,4,5} {
		\draw (\c) circle (4pt);
	}

	\draw (A) -- (1);
	\draw[dotted] (A) -- (2);
	\draw[dotted] (B) -- (3);
	\draw[dotted] (B) -- (4);
	\draw[dotted] (B) -- (5);
	\draw (A) -- (B);
\end{tikzpicture}
\qquad
\begin{tikzpicture}[scale=0.4]
	\coordinate (1) at (-2,0.7);
	\coordinate (2) at (-2,-0.7);
	\coordinate (3) at (1.5,-1);
	\coordinate (4) at (2,0);
	\coordinate (5) at (1.5,1);
	\coordinate (A) at (-1,0);
	\coordinate (B) at (0.5,0);
	
	\foreach \c in {1,2,3,4,5,A,B} {
		\fill (\c) circle (2pt);
	}
	\foreach \c in {1,2,3,4,5} {
		\draw (\c) circle (4pt);
	}

	\draw[dotted] (A) -- (1);
	\draw (A) -- (2);
	\draw[dotted] (B) -- (3);
	\draw[dotted] (B) -- (4);
	\draw[dotted] (B) -- (5);
	\draw (A) -- (B);
\end{tikzpicture}
\qquad
\begin{tikzpicture}[scale=0.4]
	\coordinate (1) at (-2,0.7);
	\coordinate (2) at (-2,-0.7);
	\coordinate (3) at (1.5,-1);
	\coordinate (4) at (2,0);
	\coordinate (5) at (1.5,1);
	\coordinate (A) at (-1,0);
	\coordinate (B) at (0.5,0);
	
	\foreach \c in {1,2,3,4,5,A,B} {
		\fill (\c) circle (2pt);
	}
	\foreach \c in {1,2,3,4,5} {
		\draw (\c) circle (4pt);
	}

	\draw[dotted] (A) -- (1);
	\draw[dotted] (A) -- (2);
	\draw (B) -- (3);
	\draw[dotted] (B) -- (4);
	\draw[dotted] (B) -- (5);
	\draw (A) -- (B);
\end{tikzpicture}
\qquad
\begin{tikzpicture}[scale=0.4]
	\coordinate (1) at (-2,0.7);
	\coordinate (2) at (-2,-0.7);
	\coordinate (3) at (1.5,-1);
	\coordinate (4) at (2,0);
	\coordinate (5) at (1.5,1);
	\coordinate (A) at (-1,0);
	\coordinate (B) at (0.5,0);
	
	\foreach \c in {1,2,3,4,5,A,B} {
		\fill (\c) circle (2pt);
	}
	\foreach \c in {1,2,3,4,5} {
		\draw (\c) circle (4pt);
	}

	\draw[dotted] (A) -- (1);
	\draw[dotted] (A) -- (2);
	\draw[dotted] (B) -- (3);
	\draw (B) -- (4);
	\draw[dotted] (B) -- (5);
	\draw (A) -- (B);
\end{tikzpicture}
\qquad
\begin{tikzpicture}[scale=0.4]
	\coordinate (1) at (-2,0.7);
	\coordinate (2) at (-2,-0.7);
	\coordinate (3) at (1.5,-1);
	\coordinate (4) at (2,0);
	\coordinate (5) at (1.5,1);
	\coordinate (A) at (-1,0);
	\coordinate (B) at (0.5,0);
	
	\foreach \c in {1,2,3,4,5,A,B} {
		\fill (\c) circle (2pt);
	}
	\foreach \c in {1,2,3,4,5} {
		\draw (\c) circle (4pt);
	}

	\draw[dotted] (A) -- (1);
	\draw[dotted] (A) -- (2);
	\draw[dotted] (B) -- (3);
	\draw[dotted] (B) -- (4);
	\draw (B) -- (5);
	\draw (A) -- (B);
\end{tikzpicture}
\]
There are $6$ forests in $\forests_2(G;S)$:

\begin{minipage}{0.4\textwidth}
	\centering
	\begin{tikzpicture}[scale=0.4]
		\coordinate (1) at (-2,0.7);
		\coordinate (2) at (-2,-0.7);
		\coordinate (3) at (1.5,-1);
		\coordinate (4) at (2,0);
		\coordinate (5) at (1.5,1);
		\coordinate (A) at (-1,0);
		\coordinate (B) at (0.5,0);
		
		\foreach \c in {1,2,3,4,5,A,B} {
			\fill (\c) circle (2pt);
		}
		\foreach \c in {1,2,3,4,5} {
			\draw (\c) circle (4pt);
		}

		\draw[dotted] (A) -- (1);
		\draw[dotted] (A) -- (2);
		\draw (B) -- (3);
		\draw[dotted] (B) -- (4);
		\draw[dotted] (B) -- (5);
		\draw[dotted] (A) -- (B);

		\draw[color=red, line width=12pt, cap=round, opacity=0.1] (A) -- (A);		
	\end{tikzpicture}
	\quad
	\begin{tikzpicture}[scale=0.4]
		\coordinate (1) at (-2,0.7);
		\coordinate (2) at (-2,-0.7);
		\coordinate (3) at (1.5,-1);
		\coordinate (4) at (2,0);
		\coordinate (5) at (1.5,1);
		\coordinate (A) at (-1,0);
		\coordinate (B) at (0.5,0);
		
		\foreach \c in {1,2,3,4,5,A,B} {
			\fill (\c) circle (2pt);
		}
		\foreach \c in {1,2,3,4,5} {
			\draw (\c) circle (4pt);
		}

		\draw[dotted] (A) -- (1);
		\draw[dotted] (A) -- (2);
		\draw[dotted] (B) -- (3);
		\draw (B) -- (4);
		\draw[dotted] (B) -- (5);
		\draw[dotted] (A) -- (B);

		\draw[color=red, line width=12pt, cap=round, opacity=0.1] (A) -- (A);		
	\end{tikzpicture} 
	\\[0.3cm]
	\begin{tikzpicture}[scale=0.4]
		\coordinate (1) at (-2,0.7);
		\coordinate (2) at (-2,-0.7);
		\coordinate (3) at (1.5,-1);
		\coordinate (4) at (2,0);
		\coordinate (5) at (1.5,1);
		\coordinate (A) at (-1,0);
		\coordinate (B) at (0.5,0);
		
		\foreach \c in {1,2,3,4,5,A,B} {
			\fill (\c) circle (2pt);
		}
		\foreach \c in {1,2,3,4,5} {
			\draw (\c) circle (4pt);
		}

		\draw[dotted] (A) -- (1);
		\draw[dotted] (A) -- (2);
		\draw[dotted] (B) -- (3);
		\draw[dotted] (B) -- (4);
		\draw (B) -- (5);
		\draw[dotted] (A) -- (B);

		\draw[color=red, line width=12pt, cap=round, opacity=0.1] (A) -- (A);		
	\end{tikzpicture}
\end{minipage}
\begin{minipage}{0.25\textwidth}
\centering
	\begin{tikzpicture}[scale=0.4]
		\coordinate (1) at (-2,0.7);
		\coordinate (2) at (-2,-0.7);
		\coordinate (3) at (1.5,-1);
		\coordinate (4) at (2,0);
		\coordinate (5) at (1.5,1);
		\coordinate (A) at (-1,0);
		\coordinate (B) at (0.5,0);
		
		\foreach \c in {1,2,3,4,5,A,B} {
			\fill (\c) circle (2pt);
		}
		\foreach \c in {1,2,3,4,5} {
			\draw (\c) circle (4pt);
		}

		\draw (A) -- (1);
		\draw[dotted] (A) -- (2);
		\draw[dotted] (B) -- (3);
		\draw[dotted] (B) -- (4);
		\draw[dotted] (B) -- (5);
		\draw[dotted] (A) -- (B);

		\draw[color=red, line width=12pt, cap=round, opacity=0.1] (B) -- (B);		
	\end{tikzpicture}
	\\[0.3cm]
	\begin{tikzpicture}[scale=0.4]
		\coordinate (1) at (-2,0.7);
		\coordinate (2) at (-2,-0.7);
		\coordinate (3) at (1.5,-1);
		\coordinate (4) at (2,0);
		\coordinate (5) at (1.5,1);
		\coordinate (A) at (-1,0);
		\coordinate (B) at (0.5,0);
		
		\foreach \c in {1,2,3,4,5,A,B} {
			\fill (\c) circle (2pt);
		}
		\foreach \c in {1,2,3,4,5} {
			\draw (\c) circle (4pt);
		}
	
		\draw[dotted] (A) -- (1);
		\draw (A) -- (2);
		\draw[dotted] (B) -- (3);
		\draw[dotted] (B) -- (4);
		\draw[dotted] (B) -- (5);
		\draw[dotted] (A) -- (B);

		\draw[color=red, line width=12pt, cap=round, opacity=0.1] (B) -- (B);		
	\end{tikzpicture}
\end{minipage}
\begin{minipage}{0.25\textwidth}
\centering
	\begin{tikzpicture}[scale=0.4]
		\coordinate (1) at (-2,0.7);
		\coordinate (2) at (-2,-0.7);
		\coordinate (3) at (1.5,-1);
		\coordinate (4) at (2,0);
		\coordinate (5) at (1.5,1);
		\coordinate (A) at (-1,0);
		\coordinate (B) at (0.5,0);
		
		\foreach \c in {1,2,3,4,5,A,B} {
			\fill (\c) circle (2pt);
		}
		\foreach \c in {1,2,3,4,5} {
			\draw (\c) circle (4pt);
		}

		\draw[dotted] (A) -- (1);
		\draw[dotted] (A) -- (2);
		\draw[dotted] (B) -- (3);
		\draw[dotted] (B) -- (4);
		\draw[dotted] (B) -- (5);
		\draw (A) -- (B);

		\draw[color=red, line width=12pt, cap=round, opacity=0.1] (A) -- (B);		
	\end{tikzpicture}
\end{minipage}

\noindent
Out of the floating components of forests in $\forests_2(G; S)$, 3 have outdegree three, 2 have outdegree four, and 1 has outdegree five.

The determinant of the distance submatrix is
\[
	\det D[S] = 368	
	= (-1)^4 2^3 \left( 6 \cdot 11 - (3 \cdot 1^2 + 2 \cdot 2^2 + 1 \cdot 3^2) \right),
\]
and the equilibrium vector is $\boldm = \begin{pmatrix}
5 & 5 & 4 & 4 & 4
\end{pmatrix}^\tr.$
\end{eg}

\begin{eg}
Suppose $G$ is the tree with edge lengths shown in Figure~\ref{fig:tree-4}, with four leaf vertices and two internal vertices.
Let $S$ denote the set of four leaf vertices.
\begin{figure}[h]
	\centering
	\begin{tikzpicture}
		\coordinate (1) at (-1.3,0.47);
		\coordinate (2) at (-1.3,-0.47);
		\coordinate (A) at (-0.5,0);
		\coordinate (B) at (0.5,0);
		\coordinate (3) at (1.3,0.47);
		\coordinate (4) at (1.3,-0.47);
	
		\draw (1) -- node[above] {$a$} (A) 
			-- node[below] {$b$} (2);
		\draw (A) -- node[above] {$c$} (B);
		\draw (3) -- node[above] {$d$} (B) 
			-- node[below] {$e$} (4);
	
		\foreach \c in {1, 2, 3, 4, A, B} {
			\fill[black] (\c) circle (2pt);
		}
		\node[left=2pt] at (1) {$1$};
		\node[left=2pt] at (2) {$2$};
		\node[right=2pt] at (3) {$3$};
		\node[right=2pt] at (4) {$4$};
	\end{tikzpicture}
	\caption{Tree with four leaves, and varying edge lengths.}
	\label{fig:tree-4}
\end{figure}
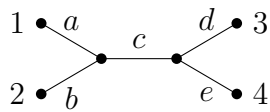
	
Then
\[
	D[S] = \begin{pmatrix}
	0 & a + b & a + c + d & a + c + e \\
	a + b & 0 & b + c + d & b + c + e \\
	a + c + d & b + c + d & 0 & d + e \\
	a + c + e & b + c + e & d + e & 0
	\end{pmatrix}
\]
and the equilibrium vector is
\begin{multline}
	\boldm = 
	abd \begin{pmatrix} 1 \\ 1 \\ 1 \\ -1 \end{pmatrix} 
	+ abe \begin{pmatrix} 1 \\ 1 \\ -1 \\ 1 \end{pmatrix} 
	+ acd \begin{pmatrix} 1 \\ 0 \\ 1 \\ 0 \end{pmatrix} 
	+ ace \begin{pmatrix} 1 \\ 0 \\ 0 \\ 1 \end{pmatrix} 
	+ ade \begin{pmatrix} 1 \\ -1 \\ 1 \\ 1 \end{pmatrix} \\
	+ bcd \begin{pmatrix} 0 \\ 1 \\ 1 \\ 0 \end{pmatrix} 
	+ bce \begin{pmatrix} 0 \\ 1 \\ 0 \\ 1 \end{pmatrix} 
	+ bde \begin{pmatrix} -1 \\ 1 \\ 1 \\ 1 \end{pmatrix} .
\end{multline}

The determinant of $D[S]$ normalized by its sum of cofactors, is
\begin{multline}
	\frac{\det D[S]}{\cof D[S]} 
	= \frac{1}{2} \Big( (a + b + c + d + e)  \\
	 - \frac{(abcd + abce + acde + bcde) + 4 (abde)}{(abd + abe + acd + ace + ade + bcd + bce + bde)} \Big).
\end{multline}
\end{eg}

\bibliography{tree-distance-ref} 
\bibliographystyle{abbrv}

\vspace{3mm}
\end{document}